\theoremstyle{plain}
\newtheorem{theorem}{Theorem}[section]
\newtheorem{lemma}[theorem]{Lemma}
\theoremstyle{definition}
\newtheorem{assumption}[theorem]{Assumption}
\newtheorem{definition}[theorem]{Definition}
\newtheorem{example}[theorem]{Example}
\newtheorem{remark}[theorem]{Remark}
\newcommand{\E}{{\mathbb{E}}}
\newcommand{\N}{{\mathbb{N}}}
\renewcommand{\P}{{\mathbb{P}}}
\newcommand{\R}{{\mathbb{R}}}
\newcommand{\tr}{\mathrm{tr}}
\newcommand{\cD}{{\cal D}}
\newcommand{\cF}{{\cal F}}
\newcommand{\cL}{{\cal L}}
\newcommand{\domain}{\R^d}
\newcommand{\usol}{unique local solution}
\newcommand{\ugsol}{unique global solution}
\newcommand{\msol}{unique maximal local solution}
\begin{document}

\title{
On the Existence of Solutions of a Class of SDEs with Discontinuous Drift and Singular Diffusion
\thanks{G. Leobacher and M. Sz\"olgyenyi are supported by the Austrian Science
Fund (FWF): Project F5508-N26, which is part of the Special Research Program
"Quasi-Monte Carlo Methods: Theory and Applications". \newline M. Sz\"olgyenyi
is supported by the Austrian Science Fund (FWF) Project P21943. \newline S.
Thonhauser is supported by the Swiss National Science Foundation (SNF) Project
200021-124635/1, by an Upper Austrian excellence scholarship, and by the
Austrian Science Fund (FWF): Project F5510-N26, which is part of the Special
Research Program "Quasi-Monte Carlo Methods: Theory and Applications".}} 

\author{%
  Gunther Leobacher\footnote{Johannes Kepler University, Austria.
    \tt{gunther.leobacher@jku.at}}
  ~and 
  Stefan Thonhauser\footnote{Technical University Graz, Austria. \tt{stefan.thonhauser@math.tugraz.at} }
  ~and 
  Michaela Sz\"olgyenyi\footnote{Johannes Kepler University, Austria.
    \tt{michaela.szoelgyenyi@jku.at}}
}

\date{March 2021}

\maketitle
\begin{abstract}
The classical result by It\^o on the existence of strong solutions 
of stochastic differential equations (SDEs) with Lipschitz coefficients 
can be extended to the case where the drift is only measurable and bounded.
These generalizations are based on techniques presented by \citet{zvonkin1974}
and \citet{veretennikov1981}, which rely on the uniform ellipticity of the
diffusion coefficient. 

In this paper we study the case of degenerate ellipticity and give sufficient
conditions for the existence of a solution. The conditions on the diffusion
coefficient are more general than previous results and we gain fundamental 
insight into the geometric properties of the discontinuity of the drift on the
one hand and the diffusion vector field on the other hand. \\
Besides presenting existence results for the degenerate elliptic situation, we
give an example illustrating the difficulties in obtaining more general results
than those given.

The particular types of SDEs
considered arise naturally in the framework of combined optimal control and
filtering problems.\\

\noindent Keywords: stochastic differential equations, degenerate diffusion, discontinuous drift\\
Mathematics Subject Classification (2010): 65C30, 60H10
\end{abstract}

\section{Introduction}
\label{sec:Introduction}

In this article we are going to consider a $d$-dimensional time-homogeneous
stochastic differential equation (SDE) of the form 
\begin{align}\label{eq:SDEintro}
dX_t &= \mu (X_t) \, dt + \sigma (X_t) \,dW_t \,.
\end{align}
It\^o's well-known existence and uniqueness theorem for SDEs states that for
(locally) Lipschitz $\mu,\sigma$ there exists a unique (maximal local)
strong solution, see, e.g., \cite{mao2007}. However, in this article, we are
interested in the case where $\mu$ is not Lipschitz.

\citet{zvonkin1974} (for the one-dimensional case) and \citet{veretennikov1981}
(for the multi-dimensional case) prove that if
$\sigma$ is bounded and Lipschitz and the
infinitesimal generator of the SDE is {\em uniformly elliptic}, i.e., if there
exists a constant $\lambda>0$ such that for all $x\in \domain$ and all $v\in \domain$ we
have $v^\top\sigma(x)\sigma(x)^\top v\ge \lambda v^\top v$, then there
still exists a strong solution, even if $\mu$ is only measurable and bounded.
\citet{veretennikov1984} generalizes the result  by requiring
that uniform ellipticity needs to hold only for those components in which the drift is non-Lipschitz.

Zvonkin's method is extended by \citet{zhang2005} to a locally integrable
drift function and non-degenerate diffusion.  Beyond the aforementioned
classical results one can find several approaches for dealing with
discontinuous drift coefficients in the literature. 
A very natural way is to use smooth approximations for discontinuous coefficients and analyze the corresponding limiting process.
Such a procedure is presented in a general form by \citet{krylov2002}, but still the presence of ellipticity is crucial.
This technique is also related to the question of stability of SDEs, see \citet[Chapter V.4]{protter2004},
for which results commonly ask for the a-priori existence of the limiting process.
In \citet{meyer2010} the existence question for the situation of a measurable drift function
and non-degenerate diffusion coefficient is dealt with using techniques from Malliavin calculus.

Another method, different in spirit, is introduced by \citet{halidias2006} who prove existence of a solution to an SDE with a drift
that is increasing in every coordinate via a construction using sub- and super
solutions. For our result, no monotonicity of the drift is needed and it is 
therefore a more  general existence and uniqueness result for the
degenerate setup.
\\

That ellipticity plays a crucial role can be illustrated by the following example for which 
uniform ellipticity fails: consider the 2-dimensional SDE
\begin{equation}\label{eq:sde_no_sol}
\begin{aligned}
dX^1_t&=\left(\frac{1}{2}-\mathrm{sgn}(X^1_t+X^2_t)\right)\, dt+\,dW_t\,, \\
dX^2_t&=-dW_t\,,
\end{aligned}
\end{equation}
with $X_0=(0,0)$. If \eqref{eq:sde_no_sol} had a strong solution then we see
from adding the two equations that there would exist a one-dimensional adapted
process $\tilde X=X^1+X^2$ satisfying
\begin{equation}\label{eq:sde_no_sol1}
\tilde X_t=\int_0^t\left(\frac{1}{2}-\mathrm{sgn}(\tilde X_s)\right) ds\,.
\end{equation}
For the sake of completeness we show in Section \ref{sec:Example} that such a process $\tilde X$ cannot exist.\\

The aim of this article is to give sufficient conditions on $\mu,\sigma$ such
that the SDE has a solution  for the case where $\sigma\sigma^\top$ fails to be
uniformly elliptic and where $\mu$ is allowed to be discontinuous and unbounded. This question is motivated by an example from \citet{sz12},
where a
SDE appears that clearly violates the classical conditions of the theorem by It\^o
and does not necessarily include an increasing drift like the example in \citet{halidias2006}.
In fact, we are especially interested in cases where the drift is decreasing. We will
present the example from \citet{sz12} as an application of our results in
Section \ref{sec:Example}.\\ 

The first main result of our contribution is Theorem \ref{thm:theorem1} which
states that there exists a \msol{} of \eqref{eq:SDEintro}, if $\sigma$
is sufficiently smooth, $(\sigma \sigma^\top)_{11}\ge c >0$, and if $\mu$ is
discontinuous in $\{x_1=0\}$, but sufficiently smooth everywhere else.  The
result can be generalized by a transformation of the domain, to allow for a
discontinuity along a hypersurface, but what remains is a certain
dependence of the result on the geometry of the discontinuity of the drift
coefficient.\\

The contribution of this article is two-fold: firstly, it closes a gap in 
combined filtering-control problems, namely that of admissibility of 
threshold and band strategies, which are very common types of optimal 
strategies.
Secondly, it contributes to the theory of SDEs, not only by giving one of the
most general existence and uniqueness results for the degenerate-elliptic
case, but also by highlighting intriguing connections between the geometry of a 
discontinuity of the drift on the one hand and the diffusion vector field
on the other hand. \\

The paper is organized as follows. In Section \ref{sec:First} we first fix
notations and present the classical notions of strong, local, and maximal local
solutions. Towards the statement of the main theorem a technical lemma and a
particular form of It\^o's formula are proven.  In Section \ref{sec:Result} we
prove the main results of this paper, and in Section \ref{sec:Example} we apply
our results to a concrete problem coming from mathematical
finance. 

\section{Definitions and first results}
\label{sec:First}

In the whole paper we work with a filtered probability space $(\Omega, \cF, (\cF_t)_{t \ge 0}, \P)$, where the filtration satisfies the usual conditions.
Furthermore, we consider a $d$-dimensional standard Brownian motion $W=(W_t)_{t\ge 0}$ on that space.
By $\Vert \cdot \Vert$ we always denote the $d$-dimensional Euclidean norm.
When using the notion ``solution'' we always refer to ``strong solution''.\\

First, let us recall the definitions of local, maximal local, and global solutions of SDEs. Consider an SDE of the form
\begin{align}\label{eq:defSDE}
 X_t = x_0 + \int_0^t \mu(X_s)\,ds + \int_0^t \sigma(X_s)\,dW_s\,,
\end{align}
with initial data $X_0 = x_0 \in \cL_{\cF_{0}}^2 (\Omega)$, where
\begin{align*}
\mu : \domain \rightarrow \domain \, \, \mbox{ and } \, \, \sigma: \domain \rightarrow \R^{d \times d}\,.
\end{align*}

\begin{definition}[\mbox{\cite{veretennikov1981}}]
An $\domain$-valued stochastic process $(X_t)_{0 \le t \le T}$ is called a \textit{solution} of \eqref{eq:defSDE}, if it is continuous,
progressively measurable w.r.t. $(\cF_t)_{t\ge0}$, and if it fulfills \eqref{eq:defSDE} for all $t\in[0, T]$ a.s.\\
A solution $(X_t)_{0 \le t \le T}$ is said to be {\it unique}, if any other
solution $(\bar{X}_t)_{0 \le t \le T}$ is indistinguishable from $(X_t)_{0 \le
t \le T}$.  \end{definition} 

\begin{definition}[\mbox{\cite[Definition 3.14]{mao2006}}]
Let $\zeta$ be an $(\cF_t)_{t\ge0}$-stopping time such that $0 \le \zeta \le T$ a.s. An
$\domain$-valued $(\cF_t)_{t\ge0}$-adapted continuous stochastic process $(X_t)_{0 \le t <
\zeta}$ is called a {\it local solution} of equation \eqref{eq:defSDE}, if
$X_0=x_0$, and,
moreover, there is a non-decreasing sequence $\{\zeta_k\}_{k \ge 1}$ of $(\cF_t)_{t\ge0}$-stopping times such that $0 \le \zeta_k \uparrow \zeta$ a.s. and 
\begin{align*}
 X_t = x_0 + \int_{0}^{t \wedge \zeta_k} \mu(X_s) \,ds + \int_{t_0}^{t \wedge \zeta_k} \sigma(X_s)\, dW_s
\end{align*}
holds for any $t \in [0, T)$ a.s.\\
If, furthermore, 
\begin{align*}
 \limsup_{t \to \zeta} \Vert X_t \Vert = \infty \, \,  \mbox{ whenever } \, \, \zeta < T\,,
\end{align*}
then it is called a {\it maximal local solution} and $\zeta$ is called the {\it explosion time}.\\
A maximal local solution $(X_t)_{0 \le t < \zeta}$ is said to be {\it unique}, if any other maximal local solution $(\bar{X}_t)_{0 \le t < \bar{\zeta}}$ is indistinguishable from it,
namely $\zeta = \bar{\zeta}$ and $X_t=\bar{X}_t$ for $0 \le t < \zeta$ a.s.
\end{definition}

\begin{definition}[\mbox{\cite[p. 94]{mao2006}}]
If the assumptions of an existence and uniqueness theorem hold on every finite subinterval $[0, T]$ of $[0, \infty)$, then \eqref{eq:defSDE} has a unique solution
$(X_t)_{t\ge0}$ on the entire interval $[0, \infty)$. Such a solution is called a {\it global} solution.
\end{definition}

Furthermore, recall that a function $\phi$ on $\domain$ is locally Lipschitz, if for all $n \in \N$ there is a constant $L_n>0$ such
that for those $x_1,x_2 \in \domain$ with $\max \{ \Vert x_1 \Vert, \Vert x_2 \Vert \} \le n$ the following condition holds:
\begin{align*}
 \Vert \phi(x_1)-\phi(x_2) \Vert \le L_n \Vert x_1-x_2\Vert \,.
\end{align*}

Consider the following system of SDEs on $\domain$:

\begin{align}\label{eq:SDE}
dX_t &= \mu (X_t) d t + \sigma (X_t) \,dW_t\,,
\end{align}

$X_0=x$.

\begin{assumption}\label{ass:terms}
We assume the following for the coefficients of \eqref{eq:SDE}.
 \begin{enumerate}
  \item[(i)] $\sigma_{ij}$, for $i=2,\ldots,d, j=1,\ldots,d$ are locally Lipschitz,
  \item[(ii)] $\sigma_{1j} \in C^{1,3}(\R\times\R^{d-1})$, for $j=1,\ldots,d$,
  \item[(iii)] $(\sigma \sigma^\top)_{11}\ge c > 0$ for some constant $c$ and for all $x \in \domain$.
 \end{enumerate}
\end{assumption}

The function $\mu:\domain\longrightarrow\domain$ is allowed to be discontinuous. However, the form of the
discontinuity needs to be a special one. 

\begin{assumption}\label{ass:b1}
We assume the following for $\mu$: there exist functions
$\mu^+,\mu^-\in C^{1,3}(\R\times\R^{d-1})$ such
that
\[
\mu(x_1,\ldots,x_d)=\left\{
\begin{array}{ccc}
\mu^+(x_1,x_2,\ldots,x_d)&\text{ if }&x_1>0\\
\mu^-(x_1,x_2,\ldots,x_d)&\text{ if }&x_1<0\\
\end{array}
\right.
\]
\end{assumption}
\begin{remark}
The value of $\mu$ for $x_1=0$ is of no significance for us since,
due to Assumption \ref{ass:terms} (iii) a solution $X$ to \eqref{eq:SDE}
does not spend a positive amount of time in the hyperplane $\{x_1=0\}$.
\end{remark}
Now, we are going to study the existence of a solution to system \eqref{eq:SDE}.
Suppose first that a solution to \eqref{eq:SDE} exists and define
\[
Z^1_t = g_1 (X_t)
\]
for some suitable function $g_1:\domain\longrightarrow\R$. For the sake of readability we are going to skip the arguments in the following.\\
Heuristically using It\^o's formula we calculate
\begin{equation*}
\begin{aligned}
dZ^1_t &=\sum_{i=1}^d \frac{\partial g_1}{\partial x_i} dX^i_t + \frac{1}{2}\sum_{i,j=1}^d \frac{\partial^2 g_1}{\partial x_i\partial x_j} d[X^i,X^j]_t\\
&= \left[\mu_1 \frac{\partial g_1}{\partial x_1} + \frac{1}{2} (\sigma \sigma^\top)_{11}
 \frac{\partial^2 g_1}{\partial x_1^2} \right] d t + \sum_{i=2}^d \mu_i \frac{\partial g_1}{\partial x_i} dt
 \\ &+ \frac{1}{2} \sum_{i,j=1}^d(1-\delta_{11}(i,j)) (\sigma \sigma^\top)_{ij} \frac{\partial^2 g_1}{\partial x_i \partial x_j} dt +  \sum_{i,j=1}^d  \sigma_{ij} \frac{\partial g_1}{\partial x_i} dW_t^j\,.
\end{aligned}
\end{equation*}
Now we would like to choose $g_1 \not\equiv 0$ such that
\begin{align}\label{eq:g}
\mu_1 \frac{\partial g_1}{\partial x_1} + \frac{1}{2} (\sigma \sigma^\top)_{11}
 \frac{\partial^2 g_1}{(\partial x_1)^2} = 0
\end{align}
to eliminate the problematic term $\mu_1$. This extends the idea of \citet{zvonkin1974}. 
A solution $g_1$ can be obtained as follows.
\begin {align*}
&\frac{\frac{\partial ^2 g_1}{(\partial x_1)^2}} {\frac{\partial g_1}{ \partial x_1}} 
= - \frac{2 \mu_1}{(\sigma \sigma^\top)_{11}} 
= \frac{\partial}{\partial x_1} \left(\log \left(\frac{\partial g_1}{\partial x_1}\right)\right)\\
& \Leftrightarrow g_1 = C_0(x_2,\dots,x_d) \int \exp \left(- \int \frac{2 \mu_1}{(\sigma \sigma^\top)_{11}} d x_1 \right) dx_1 + C_1 (x_2,\dots,x_d)\,.
\end {align*}
We are free to choose $C_1 (x_2,\dots,x_d) \equiv 0$ and $C_0 (x_2,\dots,x_d) \equiv 1$, such that
\[
g_1(x) = \int_0^{x_1} \exp \left(- \int_0^\xi \frac{2 \mu_1(t,x_2,\dots,x_d)}{(\sigma \sigma^\top)_{11}(t,x_2,\dots,x_d)}\, dt \right) d\xi \,.
\]
Note that $\frac{\partial g_1}{\partial x_1} (0,x_2,\ldots,x_d)> 0$ for all $(x_2,\ldots,x_d)\in\R^{d-1}$.
Note further that for $i\ne 1$ the terms of the 
form $\mu_i \frac{\partial g_1}{\partial x_i}$ are locally Lipschitz since
$\mu_i$ is locally bounded and $\frac{\partial g_1}{\partial x_i}$ is zero on 
$\{x_1=0\}$.

Using similar considerations we can find $g_k:\domain\longrightarrow\R$ such that for $Z^k=X^k+g_k(X)$, $k=2,\ldots,d$, the
drift coefficient is Lipschitz. The corresponding ordinary differential equation is 
\begin{align}\label{eq:gk}
\mu_k+\mu_1 \frac{\partial g_k}{\partial x_1} + \frac{1}{2} (\sigma \sigma^\top)_{11}
 \frac{\partial^2 g_k}{(\partial x_1)^2} = 0
\end{align}
for which we get a special solution 
\[
g_k(x_1,\ldots,x_d) := \int_0^{x_1} C_k(\xi,x_2,\ldots,x_d)\exp \left(- \int_0^\xi
\frac{2\mu_1(t,x_2,\ldots,x_d)}{(\sigma \sigma^\top)_{11}(t,x_2,\ldots,x_d)} \,dt \right) d\xi \,,
\]
where 
\[
C_k(\xi,x_2,\ldots,x_d):=
-\int_0^{\xi} \frac{2\mu_k(\eta,x_2,\ldots,x_d)}{(\sigma
\sigma^\top)_{11}(\eta,x_2,\ldots,x_d)}\exp \left(\int_0^\eta
\frac{2\mu_1(t,x_2,\ldots,x_d)}{(\sigma
\sigma^\top)_{11}(t,x_2,\ldots,x_d)} \,dt \right) d\eta 
\]
by the method of variation of constants. Note that for all $k\ne 1$ and all $i=1,\ldots,d$ we have
$\frac{\partial g_k}{\partial x_i}(0,x_2,\ldots,x_d) = 0$ for all $(x_2,\ldots,x_d)\in\R^{d-1}$.

\begin{lemma}\label{lem:lemma1}
Consider a function $g:\domain\longrightarrow\R$ of the form
\[
g(x_1,\ldots,x_d) = \int_0^{x_1} C(\xi,x_2,\ldots,x_d)\exp \left( \int_0^\xi a(t,x_2,\ldots,x_d)\,dt \right) d\xi\,,
\]
where 
$C$ is of the form 
\[
C(\xi,x_2,\ldots,x_d)=\int_0^\xi c_1(\eta,x_2,\ldots,x_d)d\eta+c_0
\]
for some $c_0\in \R$ and $a, c_1$ are such that 
\begin{align*}
a(x_1,\ldots,x_d)&=\left\{
\begin{array}{ccc}
a^+(x_1,x_2,\ldots,x_d)&\text{ if }&x_1>0\\
a^-(x_1,x_2,\ldots,x_d)&\text{ if }&x_1<0\\
\end{array}
\right.\\
c_1(x_1,\ldots,x_d)&=\left\{
\begin{array}{ccc}
c_1^+(x_1,x_2,\ldots,x_d)&\text{ if }&x_1>0\\
c_1^-(x_1,x_2,\ldots,x_d)&\text{ if }&x_1<0\\
\end{array}
\right.
\end{align*}
for some functions $a^+,a^-,c_1^+,c_1^-\in C^{1,3}(\R\times\R^{d-1})$.

Then $Dg$  is locally Lipschitz for $D\in \{1,\frac{\partial}{\partial x_i},\frac{\partial^2}{\partial x_i\partial x_j}\big| i,j=1,\ldots,d\}\backslash\{\frac{\partial^2}{\partial x_1^2}\}$.
\end{lemma}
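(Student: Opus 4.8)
The plan is to split the piecewise-defined problem into two globally regular ``one-sided'' problems and then glue their derivatives across the hyperplane $\{x_1=0\}$. Write $y=(x_2,\dots,x_d)$ and set, for $\epsilon\in\{+,-\}$ and on all of $\R\times\R^{d-1}$,
\[
A^\epsilon(\xi,y)=\int_0^\xi a^\epsilon(t,y)\,dt,\qquad C^\epsilon(\xi,y)=c_0+\int_0^\xi c_1^\epsilon(t,y)\,dt,
\]
\[
h^\epsilon(\xi,y)=C^\epsilon(\xi,y)\,e^{A^\epsilon(\xi,y)},\qquad g^\epsilon(x_1,y)=\int_0^{x_1}h^\epsilon(\xi,y)\,d\xi .
\]
The first point is that $g=g^+$ on $\{x_1\ge 0\}$ and $g=g^-$ on $\{x_1\le 0\}$: when $x_1\ge 0$, all $\xi\in[0,x_1]$ and all $t\in[0,\xi]$ occurring in the defining integrals are nonnegative, so $a$ and $c_1$ may be replaced there by $a^+$ and $c_1^+$, and both sides vanish at $x_1=0$; the case $x_1\le 0$ is symmetric.

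Second, I would record that the one-sided objects are smooth enough. Since $a^\epsilon,c_1^\epsilon\in C^{1,3}(\R\times\R^{d-1})$, differentiation under the integral sign (valid because the integrands are continuous) gives $A^\epsilon,C^\epsilon\in C^{2,3}(\R\times\R^{d-1})$; the Leibniz rule and the chain rule for $\exp$ then give $h^\epsilon\in C^{2,3}(\R\times\R^{d-1})$, and one more integration in $x_1$ gives $g^\epsilon\in C^{3,3}(\R\times\R^{d-1})$. In particular $\partial_{x_1}g^\epsilon=h^\epsilon$, and every partial derivative of $g^\epsilon$ of order at most two is locally Lipschitz on $\R^d$, since each of its first-order partials is a partial derivative of $g^\epsilon$ of total order at most three, hence continuous and locally bounded. (This is where the assumed regularity $a^\epsilon,c_1^\epsilon\in C^{1,3}$ is used, in particular the three $y$-derivatives; note that even $\partial_{x_1}^2 g^\epsilon=\partial_{x_1}h^\epsilon$ is covered, so the exclusion of $\partial_{x_1}^2$ in the lemma does not arise here but from the matching computed in the next step.)

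Third -- the crux -- I would compare $g^+$ and $g^-$ to second order along $\{x_1=0\}$. Because $A^\epsilon(0,y)=0$, $C^\epsilon(0,y)=c_0$, and every inner integral $\int_0^0(\cdot)$ vanishes, one reads off
\[
g^\epsilon(0,y)=0,\qquad \partial_{x_1}g^\epsilon(0,y)=c_0,\qquad \partial_{x_j}g^\epsilon(0,y)=0\quad(j\ge 2),
\]
\[
\partial_{x_1}\partial_{x_j}g^\epsilon(0,y)=\partial_{x_j}h^\epsilon(0,y)=0\quad(j\ge 2),\qquad \partial_{x_i}\partial_{x_j}g^\epsilon(0,y)=0\quad(i,j\ge 2),
\]
none of which depends on $\epsilon$, whereas $\partial_{x_1}^2 g^\epsilon(0,y)=\partial_{x_1}h^\epsilon(0,y)=c_1^\epsilon(0,y)+c_0\,a^\epsilon(0,y)$ does. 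Let $\mathcal D$ be the set of operators appearing in the lemma, i.e. $\{1,\ \partial/\partial x_i,\ \partial^2/\partial x_i\partial x_j\}$ with $\partial^2/\partial x_1^2$ removed. Since $g$ equals $g^+$ on $\{x_1\ge 0\}$ and $g^-$ on $\{x_1\le 0\}$, it follows for every $D\in\mathcal D$ that $Dg=Dg^+$ on $\{x_1\ge 0\}$ and $Dg=Dg^-$ on $\{x_1\le 0\}$; since $Dg^+$ and $Dg^-$ are continuous on $\R^d$ by the second step and agree along $\{x_1=0\}$ by the display above, $Dg$ is continuous on $\R^d$. For $\partial_{x_1}^2$ this matching fails, which is precisely why it must be excluded.

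Finally, I would upgrade continuity to local Lipschitz continuity by the elementary gluing principle: if $f:\R^d\to\R$ restricts to a locally Lipschitz function on each of the closed half-spaces $\{x_1\ge 0\}$ and $\{x_1\le 0\}$, then $f$ is locally Lipschitz on $\R^d$ -- for $x,z$ in a ball with $x_1>0>z_1$, let $w$ be the point where the segment $[x,z]$ meets $\{x_1=0\}$ and estimate $|f(x)-f(z)|\le|f(x)-f(w)|+|f(w)-f(z)|\le L(|x-w|+|w-z|)=L|x-z|$. Applied with $f=Dg$ for $D\in\mathcal D$, whose restriction to $\{x_1\ge 0\}$ (resp. $\{x_1\le 0\}$) coincides with the restriction of $Dg^+$ (resp. $Dg^-$) -- locally Lipschitz on $\R^d$ by the second step -- this yields that $Dg$ is locally Lipschitz, which is the assertion. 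The only genuinely delicate part is the bookkeeping in the second and third steps: one must check that no $D\in\mathcal D$ ever calls for more than three $y$-derivatives of $a^\epsilon,c_1^\epsilon$, and that the boundary values produced along $\{x_1=0\}$ are $\epsilon$-independent; both hold because $g$ arises from $h^\epsilon$ by one more integration in $x_1$, so that among all second-order derivatives of $g$ only $\partial_{x_1}^2 g$ reaches the raw jumps of $a$ and $c_1$ (through $\partial C^\epsilon/\partial x_1=c_1^\epsilon$ and $\partial e^{A^\epsilon}/\partial x_1=a^\epsilon e^{A^\epsilon}$), every other admissible derivative producing only integrated -- hence continuous -- expressions in $a^\epsilon,c_1^\epsilon$ and their $y$-derivatives.
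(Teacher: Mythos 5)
Your proof is correct and follows essentially the same route as the paper's: local Lipschitz continuity of $Dg$ is obtained from boundedness of the relevant third derivatives on each closed half-space $\{x_1\ge 0\}$, $\{x_1\le 0\}$, combined with the segment-splitting argument at the hyperplane $\{x_1=0\}$ (your ``gluing principle'' is exactly the paper's estimate via the intersection point $z$). Your version is somewhat more explicit than the paper's in that it introduces the global one-sided extensions $g^{\pm}$ and verifies that all derivatives $Dg^{+}$, $Dg^{-}$ with $D\neq\frac{\partial^2}{\partial x_1^2}$ actually agree along $\{x_1=0\}$ --- a matching the paper's proof uses implicitly --- and in pinpointing that only $\frac{\partial^2}{\partial x_1^2}g$ sees the raw jumps of $a$ and $c_1$.
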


\begin{proof}
For $i\ne 1$ 
we may exchange differentiation w.r.t. $x_i$ and integration (see, e.g., 
\cite[Theorem 9.42]{rudin1976}).
Furthermore, we use
that  $a,c_1$ have continuous third derivatives w.r.t. $(x_2,\ldots,x_d)$, as warranted by our assumptions on the functions $a,c_1$.
Thus we get that $g,\frac{\partial g}{\partial x_i}$ for $i=1,\ldots,d$ are locally Lipschitz and $\frac{\partial^2 g}{\partial x_i\partial x_j}$ is locally Lipschitz for $i,j\ne 1$.

It remains to show that $\frac{\partial^2 g}{\partial x_j\partial x_1}$ is locally Lipschitz for all $j>1$. 
Let $r>0$ and let
$x,y\in B_r(0)$. 
$\frac{\partial^3 g}{\partial x_j^2\partial x_1}$,
$\frac{\partial^3 g}{\partial x_j\partial x_1^2}$ exist and are bounded on $B_r(0)$, say by
$K$.
Consider first the case where $x_1,y_1>0$ (i.e., both points lie on the
same side of the hyperplane $\{x_1=0\}$):
\begin{align*}
\left|\frac{\partial^2 g}{\partial x_j\partial x_1}(y)-
\frac{\partial^2 g}{\partial x_j\partial x_1}(x)\right|
\le \sup_{\Vert z\Vert \le r}\left\|\left(\frac{\partial^3 g}{\partial x_j^2\partial x_1}(z),
\frac{\partial^3 g}{\partial x_j\partial x_1^2}(z)\right)\right\|\,\|y-x\|
\le\sqrt{2}K \|y-x\|\,.
\end{align*}
If exactly one of the $x_1,y_1$ is zero, the same estimate holds.

If $x_1<0<y_1$, let $z$ be the intersection of the hyperplane $\{x_1=0\}$
with the line connecting $x$ and $y$. Now make the same estimate as above twice. 

Finally, let $x_1=y_1=0$. Let $z_1=\|y-x\|/2$, $(z_2,\ldots,z_d)=\frac{1}{2}((x_2,\ldots,x_d)+(y_2,\ldots,y_d))$. Then
\begin{align*}
\left|\frac{\partial^2 g}{\partial x_j\partial x_1}(y)-
\frac{\partial^2 g}{\partial x_j\partial x_1}(x)\right|&\le
\left|\frac{\partial^2 g}{\partial x_j\partial x_1}(y)-
\frac{\partial^2 g}{\partial x_j\partial x_1}(z)\right|
+\left|\frac{\partial^2 g}{\partial x_j\partial x_1}(z)-
\frac{\partial^2 g}{\partial x_j\partial x_1}(x)\right|\\
&\le \sqrt{2}K(\|y-z\|+\|z-x\|) =
2K\|y-z\|\,.
\end{align*}
\end{proof}

From Lemma \ref{lem:lemma1} together with Assumptions \ref{ass:terms} (ii) and \ref{ass:b1} it follows that for all $k=1,\ldots ,d$ the function $D g_k$ is locally Lipschitz for $D\in \{1,\frac{\partial}{\partial x_i},\frac{\partial^2}{\partial x_i\partial x_j}\big| i,j=1,\ldots,d\}\backslash\{\frac{\partial^2}{\partial x_1^2}\}$.

Define a function $G:\domain\longrightarrow\domain$ by
\begin{align*}
G(x_1,\ldots,x_d)&:=(g_1(x),x_2+g_2(x),\ldots,x_d+g_d(x))\,.
\end{align*}
Then $\frac{\partial G_k}{\partial x_i}(0,x_2,\ldots,x_d)=\delta_{i,k}$.
In particular, due to the inverse function theorem \cite[Theorem 9.24]{rudin1976}, $G$ is locally invertible, i.e., 
for every $x_0\in\domain$ there exist $r>0$ and $H:G(B_{r}(x_0))\to\domain$ such that
\begin{align*}
H\circ G=id_{B_r(x_0)}\mbox{ and } G\circ H=id_{G(B_r(x_0))}\,.
\end{align*}
Note that $H$ inherits the smoothness from $G$, see \cite[Theorem A.1]{leobacher2020}.\\

Before proceeding we need to clarify the notion \emph{local} in our context. Below we are going to use the locally defined function $H$ for establishing the existence of a solution to
\eqref{eq:SDE}. Naturally, for $X_0=x_0$ the -- still to be constructed -- solution $X$ exists on $B_r(x_0)$.
Setting $\zeta^k=\zeta=\inf\{t>0\,\vert\,X_t\notin B_r(x_0)\}$ for $k\in\N$ in Definition 2.2 we will have that $(X_{t\wedge \zeta^k})_{t\geq 0}$
fulfills \eqref{eq:SDE}. Therefore it is a local solution in the sense of Definition 2.2 but on the restricted domain $B_r(x_0)$
we will have existence of a \emph{unique} solution to \eqref{eq:SDE} for every initial point $X_0\in B_r(x_0)$.
It is maximal in the sense that there will be no explosion before reaching the boundary of $B_r(x_0)$.\\

Now, let us assume for the moment that a solution
$X$ to the SDE \eqref{eq:SDE} exists.
Let $Z_t:=G(X_t)$. The functions $g_k$ for 
$k=2,\ldots,d$ are defined in a way to guarantee that the drift of 
$Z^k$ is locally Lipschitz, i.e., the discontinuities are removed from
the drift.\\

Now, let us consider the following 'transformed' SDE:
\[
dZ_t=\left(\nabla G(X_t)\mu(X_t) 
+ \tr\left(\sigma(X_t)^\top \nabla^2 G(X_t)\sigma(X_t)\right)\right)\,dt
+\nabla G(X_t)\sigma(X_t)dW_t
\]
That is, $X$ solves $dX_t=\mu(X_t)dt+\sigma(X_t) dW_t$ iff 
$Z_t=G(X_t)$ solves 
\begin{equation}\label{eq:newsystem}
\left.
\begin{aligned}
dZ_t&=\tilde\mu(Z_t)dt+\tilde\sigma(Z_t)dW_t\qquad \text{ with}\\
\tilde\mu(z)&=(\nabla G)(H(z))\mu(H(z)) 
+ \tr\left(\sigma(H(z))^\top (\nabla^2 G)(H(z))\sigma(H(z))\right)\,,\\
\tilde \sigma(z)&=(\nabla G)(H(z))\sigma(H(z))\,.
\end{aligned}
\right\}
\end{equation}

\begin{lemma}\label{thm:lemma2}
Let Assumptions \ref{ass:terms} and \ref{ass:b1} be fulfilled.\\
Then system \eqref{eq:newsystem} has a \usol{} $Z$.
\end{lemma}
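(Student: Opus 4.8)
The plan is to deduce Lemma \ref{thm:lemma2} from the classical It\^o existence and uniqueness theorem for SDEs with locally Lipschitz coefficients (see, e.g., \cite{mao2007}): once we know that on $G(B_r(x_0))$ the transformed coefficients $\tilde\mu,\tilde\sigma$ of \eqref{eq:newsystem} are locally Lipschitz — after $\tilde\mu$ has been suitably (re)defined on the image of the hyperplane $\{x_1=0\}$ — that theorem delivers the \usol{} of \eqref{eq:newsystem} started at $G(x_0)$ in exactly the restricted-domain sense fixed just before the statement, which is the assertion. Two observations simplify the bookkeeping. First, $H$ is locally Lipschitz: $G$ is $C^1$ with continuous, everywhere invertible differential, so by the inverse function theorem $H$ is $C^1$ as well. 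Second, finite sums, products, and compositions of locally Lipschitz and locally bounded functions are again locally Lipschitz, so it suffices to show that the coefficients, viewed as functions of $x$ before composition with $H$, are locally Lipschitz.

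I would first dispose of $\tilde\sigma$. Its entries are sums of products of the $\sigma_{ij}$, locally Lipschitz by Assumption \ref{ass:terms} (i)--(ii), with entries of $\nabla G$, which are constants or first-order partials $\frac{\partial g_k}{\partial x_i}$; the latter are locally Lipschitz by Lemma \ref{lem:lemma1}, since first derivatives belong to the admissible family of operators $D$ there. Hence $x\mapsto(\nabla G)(x)\,\sigma(x)$ is locally Lipschitz, and composition with $H$ shows that $\tilde\sigma$ is locally Lipschitz on $G(B_r(x_0))$.

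The heart of the matter is $\tilde\mu=\phi\circ H$ with $\phi(x)=(\nabla G)(x)\,\mu(x)+\tr\!\big(\sigma(x)^\top(\nabla^2 G)(x)\,\sigma(x)\big)$; it is enough to show that $\phi$ extends across $\{x_1=0\}$ to a locally Lipschitz function. (Note that $G$ fixes $\{x_1=0\}$ pointwise, since $g_k(0,\cdot)\equiv 0$, so the image of that hyperplane is again $\{z_1=0\}$.) In the $k$-th component $\phi_k$, the only terms that are not manifestly locally Lipschitz are those carrying $\mu_k$ with unit coefficient (when $k\ge2$), the term $\mu_1\frac{\partial g_k}{\partial x_1}$, and the term carrying the discontinuous second partial $\frac{\partial^2 g_k}{\partial x_1^2}$; but by the very construction of the $g_k$ these assemble into the left-hand side of \eqref{eq:g} when $k=1$, and of \eqref{eq:gk} when $k\ge2$, hence vanish identically on $\{x_1\neq0\}$. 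What remains of $\phi_k$ is a sum of terms $(\sigma\sigma^\top)_{ij}\frac{\partial^2 g_k}{\partial x_i\partial x_j}$ with $(i,j)\neq(1,1)$ — locally Lipschitz because the $(\sigma\sigma^\top)_{ij}$ and, by Lemma \ref{lem:lemma1}, the second partials involved are — together with terms $\mu_i\frac{\partial g_k}{\partial x_i}$ for $i\ge2$. For these last terms I would argue exactly as in the discussion preceding Lemma \ref{lem:lemma1} and in its proof: $\mu_i$ is bounded on bounded sets and locally Lipschitz on each of $\{x_1>0\}$ and $\{x_1<0\}$, while $\frac{\partial g_k}{\partial x_i}$ is locally Lipschitz and vanishes on $\{x_1=0\}$, so $\mu_i\frac{\partial g_k}{\partial x_i}$ is locally Lipschitz on each half-space and, splitting a segment that crosses the hyperplane at its intersection point and using the vanishing there, locally Lipschitz on $\domain$. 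Thus $\phi$, and therefore $\tilde\mu$, is locally Lipschitz on $G(B_r(x_0))$, and the classical theorem gives the \usol{} $Z$.

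I expect the cancellation step to be the main obstacle. It forces one to pin down the precise combination of discontinuous terms annihilated by the defining ODEs \eqref{eq:g} and \eqref{eq:gk} — which is the whole reason behind the special form of $g_1,\dots,g_d$ — and then to control the surviving products $\mu_i\frac{\partial g_k}{\partial x_i}$ across the discontinuity hyperplane, where the vanishing of $\frac{\partial g_k}{\partial x_i}$ on $\{x_1=0\}$, built into the construction, is what makes the estimate go through.
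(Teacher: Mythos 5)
Your argument is correct and follows exactly the route the paper takes: reduce everything to the local Lipschitz continuity of $\tilde\mu,\tilde\sigma$ (via the cancellation built into \eqref{eq:g} and \eqref{eq:gk}, the vanishing of $\frac{\partial g_k}{\partial x_i}$ on $\{x_1=0\}$, and Lemma \ref{lem:lemma1}) and then invoke the classical It\^o existence and uniqueness theorem for locally Lipschitz coefficients. The paper's own proof is just a two-line citation of Lemma \ref{lem:lemma1} and \cite[Theorem 3.15]{mao2006}; you have simply made explicit the verification it leaves implicit.
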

\begin{proof}
The drift and diffusion coefficients of \eqref{eq:newsystem} are locally Lipschitz by Lemma \ref{lem:lemma1}.
Thus, from \cite[Theorem 3.15]{mao2006} we get that \eqref{eq:newsystem} has a \usol{} $Z$.
\end{proof}
For proving our main result we need an It\^o-type formula for the function $H$
and the solution $Z$. There is a great number of extensions of the 
classical It\^o formula for functions that are not necessarily $C^2$, e.g.,
\citet{russo1996, foellmer2000, eisenbaum2000}, 
but usually they rely on non-degeneracy of the
diffusion coefficient of the
argument process, either explicitly, or implicitly, by having 
Brownian motion
as the argument.

For $x\in \R^d$ and $r\in (0,\infty)$ we denote by $B_r(x)$ the open ball with 
center $x$ and radius $r$. 

\begin{theorem}[It\^o's formula]\label{thm:ito}
For every $i\in\{1,\dots,n\}$, denote 
$\Theta^i\:=\big\{(x_1,\ldots,x_n)\colon x_i=0\big\}\subseteq\R^d$ and let
$f:\R^d\to\R$ be a function  such that
\begin{enumerate} 
\item $f$ is continuously differentiable, 
\item for all $i,j\in \{1,\ldots,d\}$ with $i\ne j$ and all $x\in\R^d$, 
the  mixed partial derivative 
$\frac{\partial^2 }{\partial x_i \partial x_j}f(x)$ 
exists,  and $\frac{\partial^2 }{\partial x_i \partial x_j}f\colon \R^d\to \R$ is continuous,
\item for all $i\in \{1,\ldots,d\}$ and all $x\in\R^d\setminus \Theta^i$, the  second partial derivative
$\frac{\partial^2 }{\partial x_i^2}f(x)$ exists, and  
$\frac{\partial^2 }{\partial x_i^2} f\colon \R^d\setminus\Theta^i\to \R$ is 
continuous,
\item for all $R\in[0,\infty)$ and all $i\in \{1,\ldots,d\}$, 
$\frac{\partial^2 }{\partial x_i^2} f\colon \R^d\setminus\Theta^i \to \R$ 
is bounded on $B_R(0)\setminus \Theta\,$.
\end{enumerate}

Furthermore, let $X$ be a continuous semimartingale starting in $x_0\in\R^d\,$.
Then for  all $t \in [0,\infty)$,
 \begin{align*}
f(X_t)&=f(x_0)+\sum_{i=1}^d \int_0^{t} \frac{\partial }{\partial x_i}f(X_s)\,dX_s^i
+\frac{1}{2} \sum_{i,j=1}^d \int_0^{t} \frac{\partial^2 }{\partial x_i\partial x_j}f(X_s)\,d[X^i,X^j]_s\,.
\end{align*}
\end{theorem}

We defer the proof to Appendix \ref{sec:app-ito}.
\section{Main result}
\label{sec:Result}

\begin{theorem}\label{thm:theorem1}
Let Assumptions \ref{ass:terms} and \ref{ass:b1} be fulfilled. Let $Z$ be the 
\usol{} of \eqref{eq:newsystem}.
Then
\begin{align*}
X=H(Z) 
\end{align*}
is the \usol{} to \eqref{eq:SDE}.
\end{theorem}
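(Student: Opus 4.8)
The plan is to verify that $X := H(Z)$ is well-defined, continuous, adapted, satisfies the SDE \eqref{eq:SDE}, and is unique, using the transformation $G$ and its local inverse $H$ together with the It\^o formula of Theorem \ref{thm:ito}. First I would note that, by Lemma \ref{thm:lemma2}, the transformed system \eqref{eq:newsystem} has a unique local solution $Z$ on $G(B_r(x_0))$ (in the restricted sense clarified in the text), and that $Z$ is itself an It\^o process with the drift and diffusion coefficients displayed in \eqref{eq:newsystem}. Hence $X = H(Z)$ is a continuous, adapted process, and it stays in $B_r(x_0)$ up to the relevant exit time, so all compositions below are licit.

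The heart of the argument is to apply It\^o's formula componentwise to $X^k = H_k(Z)$. Here I must check that each $H_k$ meets the hypotheses of Theorem \ref{thm:ito} with the roles of the variables suitably matched: $H$ inherits the smoothness of $G$, so $H_k \in C^1$, all mixed second derivatives involving at least one non-critical direction are continuous, and the only possibly-bad second derivative is $\partial^2 H_k/\partial z_1^2$, which however is bounded on $\{z_1 \ne 0\}$ because $G$ (hence $H$) is built from the $g_j$'s, whose second $x_1$-derivatives are bounded on either side of $\{x_1=0\}$ by Assumption \ref{ass:terms} and the explicit formulas; the critical hyperplane for $Z$ is $\{z_1 = 0\} = G(\{x_1=0\})$ since $g_1(0,\cdot)=0$. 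Applying Theorem \ref{thm:ito} to $f = H_k$ and the It\^o process $Z$, and then substituting the expressions for $\tilde\mu$ and $\tilde\sigma$ from \eqref{eq:newsystem}, the drift term becomes
\begin{align*}
\sum_{i} \frac{\partial H_k}{\partial z_i}\tilde\mu_i + \frac12\sum_{i,j}\frac{\partial^2 H_k}{\partial z_i\partial z_j}(\tilde\sigma\tilde\sigma^\top)_{ij},
\end{align*}
and the chain rule identities $(\nabla H)(G(x))(\nabla G)(x) = I$ together with the second-order chain rule relating $\nabla^2 H$, $\nabla^2 G$ and $\nabla H$ must collapse this expression back to $\mu_k(X)$; likewise the diffusion term $\sum_i (\partial H_k/\partial z_i)\tilde\sigma_{i\cdot}$ collapses to $\sigma_{k\cdot}(X)$. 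In other words, the transformation is designed so that $H$ undoes $G$ at the level of the SDE, and this is precisely the classical computation showing that $Z = G(X) \Leftrightarrow X = H(Z)$ solves the respective systems — only now it is carried out rigorously via Theorem \ref{thm:ito} rather than the classical $C^2$ It\^o formula.

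For uniqueness, I would argue contrapositively: if $\bar X$ were another (unique maximal) local solution of \eqref{eq:SDE} on $B_r(x_0)$, then the heuristic computation preceding Lemma \ref{thm:lemma2} — made rigorous again by Theorem \ref{thm:ito} applied to $f = G_k$ and the It\^o process $\bar X$ (the hypotheses hold because $D g_k$ is locally Lipschitz for all the relevant $D$ and $\partial^2 g_k/\partial x_1^2$ is bounded off $\{x_1=0\}$) — shows that $G(\bar X)$ solves \eqref{eq:newsystem}; by the uniqueness in Lemma \ref{thm:lemma2}, $G(\bar X) = Z$ up to indistinguishability, hence $\bar X = H(Z) = X$ on the common interval, and the explosion/exit times coincide by construction. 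Finally I would remark that the global (or maximal-local on $\domain$) statement follows by patching these local solutions along a covering of $\domain$ by such balls, as indicated in the discussion of the notion of "local" in the text. The main obstacle I anticipate is the careful bookkeeping in the verification that $H$ satisfies the hypotheses of Theorem \ref{thm:ito} — in particular identifying the critical hyperplane as $\{z_1=0\}$ and bounding $\partial^2 H_k/\partial z_1^2$ away from it — and ensuring the second-order chain-rule identities are applied only where all the involved derivatives are continuous, i.e., off the hyperplane, which is exactly where Theorem \ref{thm:ito} permits us to ignore the set $\{z_1=0\}$ since $Z^1$ spends zero local time there.
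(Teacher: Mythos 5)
Your proposal is correct and follows essentially the same route as the paper: check that the components of $H$ satisfy the hypotheses of Theorem \ref{thm:ito} (with the critical hyperplane $\{z_1=0\}=G(\{x_1=0\})$, using that $\nabla G$ is the identity there), apply that It\^o formula to $X=H(Z)$, and collapse the coefficients via the chain rule. You actually supply more detail than the published proof, which is very terse; in particular your explicit uniqueness step (pushing any other solution $\bar X$ through $G$ via Theorem \ref{thm:ito} and invoking the uniqueness of $Z$ from Lemma \ref{thm:lemma2}) is only implicit in the paper.
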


\begin{proof}
We have that $\nabla G(0,x_2,\ldots,x_d)$ is the $(d\times d)$-identity matrix, and if $x_1=0$, then also $z_1=0$.
Furthermore, the components of $H$ fulfill the assumptions of Theorem \ref{thm:ito}. Therefore,
It\^o's formula still holds.
Its application to $X=H(Z)$ yields
\[
dX_t =  \mu(X_t) \, dt + \sigma(X_t) d W_t\,,
\]
which proves the claim.
\end{proof}

From Theorem \ref{thm:theorem1} we know that a \usol{} of system \eqref{eq:SDE} exists. Now it remains to prove that there is a \msol.

\begin{theorem}\label{thm:theorem2}
Let Assumptions \ref{ass:terms} and \ref{ass:b1} be fulfilled.
Then system \eqref{eq:SDE} has a \msol.
\end{theorem}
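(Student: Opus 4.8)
The plan is to pass from the local existence-and-uniqueness statement of Theorem~\ref{thm:theorem1} to a maximal local solution by the standard patching argument, exactly as in the classical locally Lipschitz theory (cf.\ \cite[Theorem 3.15]{mao2006}), the only new ingredient being that we already \emph{have} local existence and uniqueness in a neighbourhood of every point. So the first step is to recall what Theorem~\ref{thm:theorem1} actually gives us: for every $x_0\in\domain$ there is a radius $r=r(x_0)>0$ and a unique solution of \eqref{eq:SDE} started at $x_0$ which is defined up to the exit time from $B_r(x_0)$. Moreover, since the transformation $G$ and its local inverse $H$ depend only on the geometry near $x_0$, the same $r$ works for all initial points in a slightly smaller ball, and by uniqueness two such local solutions agree on the intersection of their domains up to the first exit from either ball.

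Next I would construct the maximal local solution by exhausting the domain with an increasing sequence of stopping times. Concretely, fix the time horizon $T$ and set $\tau_0=0$, $X^{(0)}=x_0$. Having built a solution $X$ on $[0,\tau_n]$ with $X_{\tau_n}\in\cL^2_{\cF_{\tau_n}}$, apply Theorem~\ref{thm:theorem1} with initial point $X_{\tau_n}$ (using the strong Markov-type restart, which is legitimate because the coefficients are time-homogeneous) to extend the solution past $\tau_n$ up to the exit time from a ball around $X_{\tau_n}$; call the new stopping time $\tau_{n+1}\le T$. Concatenating these pieces is consistent by the local uniqueness noted above, and produces a continuous adapted process $X$ defined on $[0,\zeta)$ where $\zeta=\lim_n\tau_n\wedge T$. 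To see that this $\zeta$ is an \emph{explosion} time in the sense of Definition~2.2, I would argue that on the event $\{\zeta<T\}$ the process cannot stay in any compact set: if it did, the exit times from the successive balls would be bounded below by a fixed positive quantity (because on a compact set the radii $r(x_0)$ can be taken uniformly bounded away from $0$, by continuity of the construction), so the $\tau_n$ could not accumulate before $T$ — forcing $\limsup_{t\to\zeta}\Vert X_t\Vert=\infty$.

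Finally, uniqueness of the maximal local solution follows from the local uniqueness: if $(\bar X_t)_{0\le t<\bar\zeta}$ were another maximal local solution, then on $[0,\tau_1]$ it must coincide with $X$ by the uniqueness part of Theorem~\ref{thm:theorem1} (both are solutions in $B_{r(x_0)}(x_0)$), hence $\bar X_{\tau_1}=X_{\tau_1}$, and inductively $\bar X=X$ on $[0,\tau_n]$ for every $n$, so $\bar\zeta\ge\zeta$ and the two agree on $[0,\zeta)$; by the explosion property and symmetry $\bar\zeta=\zeta$.

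The main obstacle I anticipate is the restart step: Theorem~\ref{thm:theorem1} is stated for deterministic or $\cL^2_{\cF_0}$ initial data and for the fixed filtration, so to restart at the stopping time $\tau_n$ one needs to verify that $(X_{\tau_n+t}-X_{\tau_n})$-type increments are driven by a Brownian motion $W^{(n)}_t=W_{\tau_n+t}-W_{\tau_n}$ adapted to the shifted filtration $\cF_{\tau_n+t}$, and that the local construction via $G,H$ is insensitive to this shift — this is where time-homogeneity and the usual conditions on the filtration are used. Everything else (consistency of the pieces, the uniform-lower-bound-on-radii argument on compacts, and the explosion dichotomy) is routine and mirrors \cite[pp.~91--95]{mao2006}; I would simply cite that argument and indicate the one place where our Theorem~\ref{thm:theorem1} replaces the classical local Lipschitz existence result.
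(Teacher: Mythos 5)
Your proposal is correct and follows essentially the same route as the paper: local existence and uniqueness from Theorem~\ref{thm:theorem1}, concatenation of local solutions at successive stopping times via the strong Markov restart, a compactness argument (your uniform lower bound on the radii plays the role of the paper's finite subcover of each compact $\cD_n$ in an exhaustion $\cD_n\uparrow\domain$) to show the concatenation can only terminate by explosion, and uniqueness by induction over the pieces. No substantive difference to report.
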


\begin{proof}
 The proof consists of three steps.\\
 
 \underline{Step 1:} For each $x \in \domain$ there is a ball $B_{\varepsilon_x}(x)$ with radius $\varepsilon_x>0$
such that \eqref{eq:SDE} with $X_0=x$ has a \usol{} due to Theorem \ref{thm:theorem1}.\\
 
 \underline{Step 2:} Let $\cD_n$, $n\in \N$ be compact subsets of $\domain$ with
$\cD_n \uparrow \domain$, i.e., $\domain=\bigcup_{n \in \N} \cD_n$.
Furthermore, for all $x \in \domain$ there is an $n_0 \in \N$ such that $x \in \cD_n$ for all $n\ge n_0$.
\\
 We know from above that for all $x \in \cD_n$ there is a radius $\varepsilon_x>0$ such that \eqref{eq:SDE}
with $X_0=x$ has a \usol{} on $B_{\varepsilon_x}(x)$. Clearly, $\cD_n \subseteq \bigcup_{x \in \cD_n} B_{\varepsilon_x}(x)$
and since $\cD_n$ is compact, there exists $m<\infty$ such that $\cD_n \subseteq \bigcup_{k=1}^m B_{\varepsilon_{x_k}}\!\!(x_k)$, i.e., the covering is finite.\\
 Now, consider some fixed $x \in \cD_n$, $x \in B_{\varepsilon_{x_{k_1}}}\!\!(x_{k_1}), \dots, B_{\varepsilon_{x_{k_{\bar{m}}}}}\!\!(x_{k_{\bar{m}}})$
for $\{k_1, \dots, k_{\bar{m}}\} \subseteq \{1,\dots,m\}$ and $\bar{m}\le m$.
Since we have uniqueness of the solution on every ball, we have uniqueness on any finite intersection $\bigcap_j B_{\varepsilon_{x_{k_j}}}\!\!(x_{k_j})$.
 Thus, \eqref{eq:SDE} with $X_0=x$ has a \usol{} $(\Theta^x,\zeta^x)$ on $\bigcap_{j=1}^{\bar{m}} B_{\varepsilon_{x_{k_j}}}\!\!(x_{k_j})$, where $\Theta=X$.\\
 In detail, this means that there exist mappings
 \begin{align*}
  & \zeta: \Omega \times \cD_n \to [0,\infty]\,,\\
  & \Theta: \Omega \times \cD_n \times [0,\zeta) \to \cD_n\,,
 \end{align*}
 such that
 \begin{itemize}
  \item $\Theta$ and $\zeta$ are measurable (cf. \cite[Chapter V.6, Theorem 31]{protter2004} and due to a continuous transformation of a measurable function),
  \item $\zeta^x: \Omega \to [0,\infty]$, $\zeta^x=\inf\{ t > 0\vert \Theta_t^x \notin \bigcap_{j=1}^{\bar{m}} B_{\varepsilon_{x_{k_j}}}\!\!(x_{k_j}) \}$ is a stopping time w.r.t. our filtration, $\zeta^x>0$ a.s.
  \item $\Theta^x:\Omega \times [0,\zeta^x] \to \cD_n$ locally solves \eqref{eq:SDE} with $X_0=x$.
 \end{itemize}
 We can now use this to construct a solution on $\cD_n$. Let $\omega \in \Omega$. Define 
 \begin{align*}
  & \zeta_1(\omega):=\zeta^x(\omega)\,,\\
  & X_t^{x,n}(\omega):=\Theta_t^x(\omega)\,, \qquad \mbox{for } t\in[0,\zeta_1(\omega)]\,,
 \end{align*}
 and
 \begin{align*}
  & \zeta_{i+1}(\omega):=\zeta^{X^{x,n}_{\zeta_i(\omega)}(\omega)}(\omega)+\zeta_i(\omega)\,,\\
  & X_t^{x,n}(\omega):=\Theta_{t-\zeta_i(\omega)}^{X^{x,n}_{\zeta_i(\omega)}(\omega)}(\omega)\,, \qquad \mbox{for } t\in[\zeta_i(\omega),\zeta_{i+1}(\omega)]\,.
 \end{align*}
Then, due to the strong Markov property (cf. \cite[Chapter V.6, Theorem 32]{protter2004}), $X^{x,n}$ stopped at
$\zeta^{\cD_n}:=\inf\{ t \ge 0\vert X_t^{x,n} \notin \cD_n \}$ defines a \usol{} to \eqref{eq:SDE} on $\cD_n$.\\
 
\underline{Step 3:} It remains to extend the solution to the whole domain $\domain$. We already know that for each $x \in \domain$
there is an $n_0 \in \N$ such that for all $n\ge n_0$, $x \in \cD_n$. The sequence of stopping times $\zeta^{\cD_n}=\inf\{ t \ge 0\vert X_t^{x,n} \notin \cD_n \}$
is increasing in $n$ and thus we may define $\hat{\zeta}:=\lim_{n \to \infty} \zeta^{\cD_n}$. From Step 2 we know that for all such $n \ge n_0$ a \usol{}
 \begin{align*}
  X^{x,n}:\Omega \times [0,\zeta^{\cD_n}) \to \domain
 \end{align*}
 exists. Clearly, for $n_1,n_2 \ge 0$, $n_1 \neq n_2$ we have
 \begin{align*}
  X_t^{x,n_1}=X_t^{x,n_2} \qquad \forall t \in [0,\zeta^{\cD_{n_1}} \wedge \zeta^{\cD_{n_2}}] \mbox{ a.s.}
 \end{align*}
 Therefore, define
 \begin{align*}
  X_t^x(\omega):=X_t^{x,n}(\omega) \qquad \mbox{for } t\le \zeta^{\cD_n}\,,
 \end{align*}
 which is our \msol.
 
\end{proof}

We proved existence and uniqueness of a \msol{} of system \eqref{eq:SDE} under Assumptions \ref{ass:terms} and \ref{ass:b1}.
Naturally, by imposing stronger conditions on the coefficients we can show the corresponding global result.

\begin{theorem}\label{thm:global}
Let Assumptions \ref{ass:terms} and \ref{ass:b1} be fulfilled. Additionally, let $\sigma$  be globally Lipschitz and let $\mu$ satisfy a linear growth condition, i.e., $\|\mu(x)\|\le D_1+ D_2\|x\|$ for constants $D_1,D_2>0$.\\
Then there exists a unique global solution to \eqref{eq:SDE}.
\end{theorem}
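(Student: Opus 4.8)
The plan is to promote the unique maximal local solution $X$ on $[0,\zeta)$ furnished by Theorem~\ref{thm:theorem2} to a global one by proving non-explosion, i.e.\ $\zeta=\infty$ almost surely; once that is shown, $X$ is the unique solution on every finite interval $[0,T]$, and hence the desired unique global solution. First I would record the consequence of the extra hypotheses: global Lipschitz continuity of $\sigma$ gives $\Vert\sigma(x)\Vert\le a+b\Vert x\Vert$ for some constants $a,b$, and together with $\Vert\mu(x)\Vert\le D_1+D_2\Vert x\Vert$ and Cauchy--Schwarz one obtains a constant $C>0$ with
\[
2\,x^\top\mu(x)+\tr\!\big((\sigma\sigma^\top)(x)\big)\le C\,(1+\Vert x\Vert^2)\qquad\text{for all }x\in\domain .
\]
Next introduce the Lyapunov function $V(x):=1+\Vert x\Vert^2$, which is $C^\infty$. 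A point worth stressing is that for this argument the refined It\^o formula of Theorem~\ref{thm:ito} is \emph{not} needed: $V$ is smooth, and along $X$ stopped before explosion the coefficients $\mu,\sigma$ are bounded (they are locally bounded under Assumptions~\ref{ass:terms} and~\ref{ass:b1}), so the stopped process is a genuine It\^o process and the classical It\^o formula (cf.\ \cite{karatzas1991}) applies.

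Concretely, set $\zeta_n:=\inf\{t\ge0:\Vert X_t\Vert\ge n\}$, so $\zeta_n\uparrow\zeta$ a.s. Applying It\^o's formula to $V(X_{t\wedge\zeta_n})$ and taking expectations annihilates the stochastic integral, whose integrand $2X_s^\top\sigma(X_s)$ is bounded on $[0,\zeta_n]$ so that the integral is a martingale; using the growth bound this leaves
\[
\E\big[V(X_{t\wedge\zeta_n})\big]=\E[V(X_0)]+\E\!\int_0^{t\wedge\zeta_n}\!\!\Big(2\,X_s^\top\mu(X_s)+\tr\big((\sigma\sigma^\top)(X_s)\big)\Big)\,ds\le\E[V(X_0)]+C\!\int_0^t\!\E\big[V(X_{s\wedge\zeta_n})\big]\,ds ,
\]
where $\E[V(X_0)]=1+\E\Vert X_0\Vert^2<\infty$ by $X_0\in\cL^2_{\cF_0}(\Omega)$. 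Gronwall's lemma then gives $\E[V(X_{t\wedge\zeta_n})]\le\E[V(X_0)]\,e^{Ct}$ uniformly in $n$. Since $\Vert X_{\zeta_n}\Vert=n$ on $\{\zeta_n\le t\}$ by path continuity, we get $(1+n^2)\,\P(\zeta_n\le t)\le\E[V(X_{t\wedge\zeta_n})]\le\E[V(X_0)]e^{Ct}$, so $\P(\zeta_n\le t)\to0$ as $n\to\infty$, whence $\P(\zeta\le t)=0$ for every $t\ge0$ and thus $\zeta=\infty$ a.s. Therefore $X$ is defined on all of $[0,\infty)$; uniqueness is inherited from the uniqueness of the maximal local solution in Theorem~\ref{thm:theorem2}, and restricting to each $[0,T]$ yields the unique global solution in the sense of Section~\ref{sec:First}.

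I do not expect a genuinely hard step here: once Theorem~\ref{thm:theorem2} is in hand, this is the standard Lyapunov/Gronwall non-explosion scheme and the discontinuity of $\mu$ plays no further role, having already been absorbed in the construction of the maximal local solution. The only places needing care are (i) checking that the stopped process $X^{\zeta_n}$ is a bona fide It\^o process with square-integrable coefficients, so that classical It\^o and the martingale property are legitimate, and (ii) setting up the localization so that Gronwall applies with constants independent of $n$. One could alternatively transform via $G$ to the locally Lipschitz system~\eqref{eq:newsystem} and invoke a classical global-existence theorem, but that route would additionally require verifying a linear growth bound for $\tilde\mu,\tilde\sigma$ and that $G$ is a \emph{global} diffeomorphism, which is more delicate than the direct estimate above.
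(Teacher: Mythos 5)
Your argument is correct, and it follows the same overall strategy as the paper: localize with the exit times from balls of radius $n$, obtain a second-moment bound that is uniform in $n$ via Gronwall, and conclude non-explosion of the maximal local solution from Theorem~\ref{thm:theorem2}. The implementations differ in one respect worth noting. The paper works directly with the integral equation: it squares, applies Cauchy--Schwarz to the drift integral and Doob's $\cL^2$-inequality plus It\^o's isometry to the stochastic integral, and runs Gronwall on $t\mapsto\E\bigl(\sup_{s\le t}\Vert X_s^{T_n}\Vert^2\bigr)$; this yields the slightly stronger conclusion $\E\bigl(\sup_{t\le T}\Vert X_t\Vert^2\bigr)<\infty$, from which finiteness of the running supremum on $[0,T]$ is read off directly. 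You instead apply the classical It\^o formula to the smooth Lyapunov function $V(x)=1+\Vert x\Vert^2$, run Gronwall on $t\mapsto\E\bigl[V(X_{t\wedge\zeta_n})\bigr]$ without the supremum (so Doob's inequality is not needed), and extract non-explosion from the hitting-probability estimate $(1+n^2)\,\P(\zeta_n\le t)\le\E\bigl[V(X_{t\wedge\zeta_n})\bigr]$. Both routes are standard and both are complete here; your observation that the refined It\^o formula of Theorem~\ref{thm:ito} is not needed for the smooth function $V$ is correct (the paper likewise avoids it at this stage), and your remarks on the martingale property of the stopped stochastic integral and on inheriting uniqueness from the maximal local solution close the remaining gaps. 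Your closing caveat about the alternative route through the transformed system \eqref{eq:newsystem} is also well taken: $G$ is only a local diffeomorphism, so that route is indeed more delicate and is not the one the paper follows.
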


\begin{proof}
For the \msol{} $X$ we need to show that
\begin{align*}
 \P \left( \limsup_{t \to \zeta} \left\|X_t\right\|^2 = \infty; \, \zeta < \infty \right) = 0
\end{align*}
for all stopping times $\zeta$.
Now, let $\zeta$ be a stopping time with $\P (\zeta < \infty) > 0$.
Furthermore, let $(T_n)_{n \ge 0}$ be the sequence of stopping times defined by
\begin{align*}
T_n: = \inf \left\{ t \ge 0 :  \left\|X_t\right\|^2 \ge n \right\}\,,
\end{align*}
and let
\begin{align*}
X_t^{T_n} & = x_0 + \int_{0}^{t \wedge T_n} \mu \left(X_s^{T_n}\right) \,ds + \int_0^{t \wedge T_n} \sigma \left(X_s^{T_n}\right) \,dW_s \,.
\end{align*}

For $T>0$ large enough such that $\P (\zeta < T) > 0$ we consider
\begin{align*}
&\E \left(\sup_{t \le T} \left\|X_t^{T_n}\right\|^2 \right) \\
&\le   \E \left( \left\|x_0\right\|^2 \right) + \E \left(\sup_{t \le T} \left\| 
\int_0^{t \wedge T_n} \mu \left(X_s^{T_n}\right) \,ds
\right\|^2\right)
 +  \E \left(\sup_{t \le T} \left\|
\int_0^{t \wedge T_n} \sigma \left(X_s^{T_n}\right) \,dW_s
\right\|^2\right)\\
&= :  E_1 + E_2 + E_3.
\end{align*}

Let $D_1,D_2$ be as above.
\begin{align*}
E_2 & \le \E \left( \sup_{t \le T}\, (t \wedge T_n) \int_0^{t \wedge T_n}  \left\|\mu \left(X_s^{T_n}\right)\right\|^2\,ds \right)\\
 & \le \E \left( \sup_{t \le T}\, (t \wedge T_n) \int_0^{t \wedge T_n} \left( D_1+D_2\left\|X_s^{T_n}\right\|\right)^2  \,ds \right)\\
& \le 2 D_1^2 T^2 +2 T D_2^2 \, \int_0^{T}\E \left(\sup_{s \le t}\left\|X_s^{T_n}\right\|^2 \right)\,dt\,.
\end{align*}

By Doob's $\cL^2$-inequality and It\^o's isometry we get
\begin{align*}
E_3 & \le 4 \E \left( \sum_{i=1}^d \left( \int_0^T \sum_{j=1}^d  \sigma_{ij}(X_s^{T_n})\, dW_s^j \right) ^2 \right)
= 4 \sum_{i=1}^d \E \left( \int_0^T \left( \sum_{j=1}^d \sigma_{ij}(X_s^{T_n})\right)^2 \, ds \right)\\
&\le 4d \sum_{i=1}^d \E \left( \int_0^T \sum_{j=1}^d \sigma_{ij}(X_s^{T_n})^2 \, ds \right)
\le C_1 T + C_2 L^2 \int_0^T \E\left( \left\| X_t^{T_n} \right\|^2\right) \, dt\\
&\le C_1 T + C_2 L^2 \int_0^T \E\left(\sup_{s \le t} \left\| X_s^{T_n} \right\|^2\right) \, dt\,,
\end{align*}
where $C_1,C_2$ are constants independent of $n$ and $L$ is the maximum of all appearing Lipschitz constants. Thus, we have
\begin{align*}
E_1 + E_2 + E_3 & \le \left\|x_0\right\|^2 + 2 D_1^2 T^2 + C_1 T +(2 T D_2^2+C_2 L^2) \, \int_0^{T}\E \left(\sup_{s \le t}\left\|X_s^{T_n}\right\|^2 \right)\,dt\\
& =: A (T) + B(T) \int_0^{T}\E \left(\sup_{s \le t}\left\|X_s^{T_n}\right\|^2 \right)\,dt\,.
\end{align*}

Therefore,
\begin{align*}
 \E \left( \sup_{t \le T^*} \left\| X_t^{T_n} \right\|^2 \right) \le A(T) + B(T) \int_0^{T^*} \E \left( \sup_{s \le t} \left\| X_s^{T_n}\right\|^2 \right) \, dt
\end{align*} 
$\forall T^* \le T$. We can directly apply Gronwall's inequality and get
\begin{align*}
 \E \left( \sup_{t \le T^*} \left\| X_t^{T_n} \right\|^2 \right) \le A(T) e^{B(T) T^*} \quad \forall n \mbox{ and } \forall T^* \le T\,.
\end{align*} 
Sending $n \to \infty$ we arrive at
\begin{align*}
 \E \left( \sup_{t \le T} \left\| X_t\right\|^2 \right) \le C(T) < \infty.\,
\end{align*}

Since the above expectation is finite for each $T$, we can conclude
\begin{align*}
 &\P \left( \limsup_{t \to \zeta} \left\|X_t \right\|^2 = \infty; \, \zeta < \infty \right)
 = \lim_{T \to \infty} \P \left( \limsup_{t \to \zeta} \left\|X_t\right\|^2 = \infty; \, \zeta < T \right)\\
 &= \lim_{T \to \infty} \P \left( \limsup_{t \to \zeta} \left\|X_t \right\|^2 = \infty \, \Big\vert \, \zeta < T \right)\P\left(\zeta<T\right)
 =0\,.
\end{align*}
\end{proof}

As a generalization we would like to allow discontinuities not only along
$\{x_1=0\}$, but also along some sufficiently regular hypersurface
$\{x\in\domain:f(x)=0\}$. We consider the system
\begin{align}\label{eq:SDEtransf}
dX_t &= \mu (f(X_t),X^2_t,\ldots,X^d_t) d t + \sigma (X_t) \,dW_t\,,
\end{align}
$X_0=x_0$.

\begin{assumption}\label{ass:termstransf}
We assume the following for the coefficients of \eqref{eq:SDEtransf}.
 \begin{enumerate}
  \item[(i)] $\sigma_{ij} \in C^{1,3}(\R\times \R^{d-1})$, for $i,j=1,\ldots,d$,
  \item[(ii)] $f \in C^{3,5}(\R\times \R^{d-1})$, and $\vert \frac{\partial f}{\partial x_1}\vert > 0$,
  \item[(iii)] for some constant $c$ and for all $x \in \domain$
	     \begin{align*}
              \left\| \nabla f(x) \cdot \sigma(x)  \right\|^2\ge c > 0\,.
             \end{align*}
 \end{enumerate}
\end{assumption}

\begin{remark}
One may notice that item (iii) replaces the item (iii) from Assumption \ref{ass:terms}. Item (iii) has a nice geometric interpretation:
the diffusion component must not be parallel to the surface where the drift is discontinuous.
\end{remark}

Now we have a look at the transformation $U_t=f(X_t)$.
Due to our assumptions and \cite[Theorem 9.24]{rudin1976}, for any $x_0$ there exists a function $e \in C^{3,5}(\R\times \R^{d-1})$ such that
\begin{align*}
e(f(x),x_2,\ldots,x_d)&=x_1\,,\\
f(e(u,x_2,\ldots,x_d),x_2,\ldots,x_d)&=u\,.
\end{align*}
We define
\begin{align*}
\bar{\mu}_1(u,x_2,\ldots,x_d) &= \sum_{i=1}^d \mu_i(u,x_2,\ldots,x_d) \frac{\partial f}{\partial x_i}+\frac{1}{2}\sum_{i,j=1}^d (\sigma \sigma^\top)_{ij} \frac{\partial^2 f}{\partial x_i \partial x_j}\,,\\
\bar{\mu}_i(u,x_2,\ldots,x_d) &= \mu_i(u,x_2,\ldots,x_d)\,, \qquad i=2,\ldots,d\,,\\
\bar{\sigma}_{1j} (u, x_2,\ldots,x_d) &= \sum_{i=1}^d  \sigma_{ij} \frac{\partial f}{\partial x_i}\,, \qquad j=1,\ldots,d\,,\\
\bar{\sigma}_{ij} (u, x_2,\ldots,x_d) &= \sigma_{ij}\,, \qquad i=2,\ldots,d\,;\,\,j=1,\ldots,d\,,
\end{align*}
where all missing arguments are $(e(u,x_2,\ldots,x_d),x_2,\ldots,x_d)$.\\
This leads to the system
\begin{align}\label{eq:newsystemtransf}
(dU_t,dX^2_t,\ldots,dX^d_t)^\top &= \bar{\mu} (U_t,X^2_t,\ldots,X^d_t) d t + \bar{\sigma} (U_t,X^2_t,\ldots,X^d_t) d W_t\,,
\end {align}
$(U_0,X^2_0,\ldots,X^d_0)=(u,x_2,\ldots,x_d)=(f(x),x_2,\ldots,x_d)$.

\begin{theorem}\label{thm:transf}
 Let system \eqref{eq:SDEtransf} fulfill Assumptions \ref{ass:b1} and
\ref{ass:termstransf}. Then \eqref{eq:SDEtransf} has a \msol.  If, in addition,
$\bar\sigma$ is globally Lipschitz and $\bar\mu$ satisfies linear growth,
then \eqref{eq:SDEtransf} has a unique global solution.
\end{theorem}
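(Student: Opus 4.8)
The plan is to reduce \eqref{eq:SDEtransf} to the situation already settled in Theorems \ref{thm:theorem2} and \ref{thm:global} by means of the change of variables $U_t=f(X_t)$. By Assumption \ref{ass:termstransf}(ii) the Jacobian of $\Phi:x\mapsto(f(x),x_2,\ldots,x_d)$ equals $\partial f/\partial x_1$, which is non-vanishing and of constant sign, so $\Phi$ is injective and, by \cite[Theorem 9.24]{rudin1976}, a $C^{3,5}$-diffeomorphism of $\domain$ onto the open set $\Phi(\domain)$, with inverse $(u,x_2,\ldots,x_d)\mapsto(e(u,x_2,\ldots,x_d),x_2,\ldots,x_d)$, $e\in C^{3,5}$, as recorded before the theorem. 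Under $\Phi$ the system \eqref{eq:SDEtransf} becomes the system \eqref{eq:newsystemtransf} for $(U,X^2,\ldots,X^d)$, which is exactly of the form \eqref{eq:SDE}, with coefficients that are smooth off the hyperplane $\{u=0\}$ (the image under $\Phi$ of the discontinuity surface $\{f(x)=0\}$) and admit one-sided smooth extensions across it.

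First I would check that \eqref{eq:newsystemtransf} satisfies Assumptions \ref{ass:terms} and \ref{ass:b1}. The only non-routine identity is part (iii) of Assumption \ref{ass:terms}, and it is immediate: $(\bar\sigma\bar\sigma^\top)_{11}=\sum_{j=1}^d\bar\sigma_{1j}^2=\|\nabla f\cdot\sigma\|^2\ge c$ by Assumption \ref{ass:termstransf}(iii). For the smoothness requirements one writes $\bar\sigma_{1j}=\sum_i\sigma_{ij}\,\partial_{x_i}f$, $\bar\mu_1^{\pm}=\sum_i\mu_i^{\pm}\partial_{x_i}f+\frac12\sum_{i,j}(\sigma\sigma^\top)_{ij}\partial_{x_ix_j}f$, $\bar\mu_i^{\pm}=\mu_i^{\pm}$ and $\bar\sigma_{ij}=\sigma_{ij}$ for $i\ge2$, each composed with $(e(u,x_2,\ldots,x_d),x_2,\ldots,x_d)$, and counts derivatives: since $\sigma_{ij},\mu^{\pm}\in C^{1,3}$, $f\in C^{3,5}$ and $e\in C^{3,5}$, the extra smoothness of $f$ and $e$ is what compensates for the derivatives lost under the composition, so that $\bar\sigma_{1j},\bar\mu_1^{\pm},\bar\mu_i^{\pm}\in C^{1,3}(\R\times\R^{d-1})$ and $\bar\sigma_{ij}$ ($i\ge2$) is locally Lipschitz; this is precisely Assumptions \ref{ass:terms} and \ref{ass:b1} for \eqref{eq:newsystemtransf}. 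Since $e$ and the new coefficients live only on $\Phi(\domain)$, all of the following is read on that open domain, with ``maximal local solution'' understood as ``leaves every compact subset of $\Phi(\domain)$'', exactly as in the paragraph preceding Theorem \ref{thm:theorem1} and in Steps 1--3 of the proof of Theorem \ref{thm:theorem2}.

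With the assumptions in force, Theorem \ref{thm:theorem2} yields a \msol{} $Z=(U,X^2,\ldots,X^d)$ of \eqref{eq:newsystemtransf}. I would then set $X^1_t:=e(U_t,X^2_t,\ldots,X^d_t)$; since $e\in C^{3,5}\subseteq C^2$, the classical It\^o formula applies to $e$, and a direct computation using the identities $f(e(u,x_2,\ldots,x_d),x_2,\ldots,x_d)=u$ and $e(f(x),x_2,\ldots,x_d)=x_1$ shows that $X:=\Phi^{-1}(Z)=(X^1,X^2,\ldots,X^d)$ solves \eqref{eq:SDEtransf} (in particular $U_t=f(X_t)$, so $\bar\sigma$ along $Z$ agrees with $\nabla f\cdot\sigma$ along $X$ and the diffusion term feeds back correctly), and $X$ is maximal because $\Phi$ is a homeomorphism. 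Conversely, any solution $\tilde X$ of \eqref{eq:SDEtransf} gives, by It\^o applied to the $C^2$-function $f$, a solution $\Phi(\tilde X)$ of \eqref{eq:newsystemtransf}, so the uniqueness part of Theorem \ref{thm:theorem2} forces $\tilde X=\Phi^{-1}(\Phi(\tilde X))$ to agree with $X$; hence \eqref{eq:SDEtransf} has a \usol{}. Finally, if in addition $\bar\sigma$ is globally Lipschitz and $\bar\mu$ satisfies linear growth, Theorem \ref{thm:global} provides a unique global $Z$, and $X=\Phi^{-1}(Z)$ is the unique global solution of \eqref{eq:SDEtransf}.

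The step I expect to be the main obstacle is the regularity bookkeeping just sketched: one must make sure that composing the $C^{1,3}$ data of the original problem with the implicitly defined inverse $e$ does not push $\bar\sigma_{1j}$ and $\bar\mu_1^{\pm},\bar\mu_i^{\pm}$ below $C^{1,3}$, nor the remaining $\bar\sigma_{ij}$ below local Lipschitz continuity -- this is exactly why $f$ is required to be $C^{3,5}$ and not merely $C^{1,3}$. A smaller, more bookkeeping-level point is that $\Phi$ need not map $\domain$ onto all of $\R^d$, so the open domain $\Phi(\domain)$ has to be carried consistently through the localization argument of Theorem \ref{thm:theorem2}; for the global assertion one additionally needs the global solution of \eqref{eq:newsystemtransf} to remain in $\Phi(\domain)$, which is automatic when $\Phi$ is onto $\R^d$ and otherwise must be read into the hypotheses on $\bar\mu$ and $\bar\sigma$.
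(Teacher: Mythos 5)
Your proposal is correct and follows essentially the same route as the paper: transform via $U_t=f(X_t)$, verify that the transformed system \eqref{eq:newsystemtransf} satisfies Assumptions \ref{ass:terms} and \ref{ass:b1} (in particular $(\bar\sigma\bar\sigma^\top)_{11}=\Vert\nabla f\cdot\sigma\Vert^2\ge c$), apply Theorems \ref{thm:theorem2} and \ref{thm:global}, and map back with $e$. You supply considerably more detail than the paper's very terse argument (the regularity bookkeeping, the It\^o computations in both directions, and the caveat about $\Phi(\domain)$ possibly being a proper subset of $\R^d$), but the underlying strategy is identical.
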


\begin{proof}
Note that for the transformed system \eqref{eq:newsystemtransf} our original
assumptions on the coefficients are fulfilled.  Therefore, $X=e(U,X^2,\ldots,X^d)$ is the unique local solution to \eqref{eq:newsystemtransf}. Furthermore, we
can apply Theorem \ref{thm:theorem2} and get that system
\eqref{eq:newsystemtransf} has a \msol.  
The remaining assertion follows immediately from Theorem \ref{thm:global}.
\end{proof}

\begin{remark}
While in \citet{veretennikov1984} only those components
of the drift are allowed to be discontinuous, the related part
of the diffusion of which is uniformly elliptic,
our result allows all drift components to be discontinuous along a sufficiently smooth hypersurface.
Furthermore, in \cite{veretennikov1984} all coefficients need to be bounded, which is not required herein.
Instead however, we require more smoothness of the coefficients.\\
The proof of the main result from \cite{veretennikov1984} relies on highly
evolved methods from the literature on PDEs. Concretely, in \cite{veretennikov1984} first weak existence is proven and then pathwise uniqueness.
In contrast to that our proof is more direct.
\end{remark}


\section{Examples}
\label{sec:Example}

At this point we return to the initially cited problem from \cite{sz12}, which originates from stochastic optimal control in mathematical finance.

\begin{example}
 In \cite{sz12} the question arises whether the system
\begin{equation}\label{eq:example}
\begin{aligned}
dX^1_t &= \left(X^2_t - \kappa 1_{\{ X^1_t \geq b (X^2_t)\}} \right) \, dt + \sigma d W_t^1\,,\\
dX^2_t &= \frac{1}{\sigma} (\theta_2 - X^2_t)(X^2_t - \theta_1) d W_t^1\,,
\end{aligned}
\end{equation}
where $\sigma>0$, and $\theta_1, \theta_2$ are constants with $\theta_1 \leq X^2_t \leq \theta_2$ for $t \geq 0$, has a solution.
The motivation of system \eqref{eq:example} is as follows.
$X^1$ originally corresponds to a dividend paying firm value process of a company which is described by a Brownian motion with drift.
Since the drift is assumed to be unobservable, it is replaced by its estimator, which by an application of filtering theory is given by the process $X^2$.
The dividend payments are governed by the threshold function $b$ such that
whenever $X^1_t \ge b(X^2_t)$, dividends are paid at a constant rate
$\kappa$.\\ If $f(x,y)=x-b(y)$ fulfills items (ii) and (iii) of Assumption
\ref{ass:termstransf}, all conditions on the coefficients are fulfilled.
Thus, applying Theorem \ref{thm:transf} implies the existence of a \ugsol{} of \eqref{eq:example}.
\end{example}

Now we complete the counterexample mentioned in the introduction: 
if the crucial condition $(\sigma \sigma^\top)_{11}(x)\ge c>0$ is violated, then there is no solution.

\begin{example}
There is no measurable function $\tilde X$ satisfying
the differential equation
 \begin{align*}
   \tilde X_t&=\int_0^t\left(\frac{1}{2}-\mathrm{sgn}(\tilde X_s)\right) ds\,,\\
   \tilde X_0&=0\,.
 \end{align*}
\end{example}

\begin{proof}
Suppose such an $\tilde X$ would indeed exist.
Since $\mathrm{sgn}(\tilde X_s)$ is bounded, $t\mapsto \int_0^t\mathrm{sgn}(\tilde X_s)\, ds$
is continuous and therefore $\tilde X$ is continuous.

Let $\varepsilon>0$.
$\tilde X$ cannot be positive on $(0,\varepsilon)$, since then the integrand
would equal $-\frac{1}{2}$ on $(0,\varepsilon)$ and thus 
$\tilde X_\varepsilon=\tilde X_t-\int_t^\varepsilon \frac{1}{2} ds
=\tilde X_t-\frac{\varepsilon-t}{2}$. From this we get 
$\tilde X_t=\tilde X_\varepsilon+\frac{\varepsilon-t}{2}>\frac{\varepsilon-t}{2}$, 
thus contradicting the continuity of $\tilde X$ in $t=0$.

Now suppose there was $t_1>0$ with $\tilde X_{t_1}>0$. Then the set 
$A=\{t\in[0,t_1]:\tilde X_t=0\}$ is non-empty and bounded from above. Set $t_0:=\sup A$. 
From the continuity of $\tilde X$ it follows that $\tilde X_{t_0}=0$
and hence $t_0<t_1$.
Thus $\hat X_t:=\tilde X_{t-t_0}$ defines a solution of \eqref{eq:sde_no_sol1} with
$\hat X_0=0$ and $\hat X_t>0$ on $(0,t_1-t_0)$. But we have already ruled out
the existence of such a solution.

Thus $\tilde X$ can never be positive and, by analogous arguments, $\tilde X$ can never be 
negative. But neither can we have for $\varepsilon>0$ that $\tilde X_t=0$ for all 
$t\in (0,\varepsilon)$, since that would imply
$\tilde X_t=\int_0^t(\frac{1}{2}-\mathrm{sgn}(0))\,ds=\frac{t}{2}\ne 0$.
\end{proof}

\appendix

\section{Proof of It\^o's formula}\label{sec:app-ito}
The following well-known auxiliary results are required for the proof of Theorem \ref{thm:ito}. We recall them for the convenience of the reader and to fix notations.

\begin{lemma}\label{lemma:conv-diff}
Let $f,g\colon \R^d\to \R$ be measurable functions with $f$ locally bounded and  
$g$ continuous with  support  contained in $B_r(0)$ for some $r\in (0,\infty)$. Then
\begin{enumerate} 
\item \label{it:cont}$f*g$ is continuous.

\item \label{it:g-diff}If $g$ is continuously partially differentiable in direction $v\in\R^d\setminus\{0\}$, then $f*g$ 
is continuously partially differentiable in direction $v$ and
\[
\frac{\partial}{\partial v} (f*g)=f* \Big(\frac{\partial}{\partial v}g\Big)\,.
\]  

\item \label{it:f-diff}If $U\subseteq \R^d$ is open and $f|_U$ is continuously partially differentiable in direction $v\in\R^d\setminus\{0\}$, then $f*g$ 
is continuously partially differentiable in direction $v$ in all $x\in U$ with $B_{2r}(x)\subseteq U$, and
\[
\frac{\partial}{\partial v} (f*g)= \Big(\frac{\partial}{\partial v}f\Big)*g\,.
\]  

\end{enumerate}
\end{lemma}

\begin{proof}
\ref{it:cont}.
For every $x\in \R^d$ and every sequence $(x_n)_{n\in \N}$ in $B_1(x)$ tending to $x$ we have
\begin{align*}
|(f*g)(x_n)-(f*g)(x)|
&=\Big|\int_{\R^d} f(t)g(x_n-t)dt-\int_{\R^d} f(t)g(x-t)dt\Big|\\
&=\Big|\int_{B_{r+1}(x)} f(t)\big(g(x_n-t)-g(x-t)\big)dt\Big|\\
&\le\big\|f|_{B_{r+1}(x)}\big\|_\infty\int_{B_{r+1}(x)} |g(x_n-t)-g(x-t)|dt.
\end{align*}
Since $g$ is continuous, by bounded convergence the right hand side tends to $0$ as $n\to \infty$. 
Thus $f*g$ is continuous.

\ref{it:g-diff}. For every sequence $(a_n)_{n\in \N}$ of non-zero real numbers in $(-\|v\|^{-1},\|v\|^{-1})$  tending to 0 
define $h\colon \N\times\R^d\to\R$ 
by \[h_n(t):= f(x-t)\frac{1}{a_n}\big(g(t+a_n v)-g(t)\big)\,.\]
For every $n\in \N$, $h_n$ is a measurable function with support contained in $B_{r+1}(0)$ and  
$\lim_{n\to\infty}h_n(t)=f(x-t)\frac{\partial}{\partial v}g(t)$ for all $t\in \R^d$.
Moreover,  since the support of $g$ is contained in  $B_r(0)$ and since $g$ is continuously partially differentiable in direction $v$,  
$|f(x-t)(g(t+a_n v)-g(t))|\le \big\|f|_{B_{r+1}(x)}\big\|_\infty|a_n|\|\frac{\partial}{\partial v}g\|_\infty$ by the mean value theorem.
Thus the sequence $(h_n)_{n\in\N}$ is bounded. We have
\begin{align*}
(f*g)(x+a_n v)-(f*g)(x)
&=\int_{\R^d}f(t)\big(g(x+a_n v-t)-g(x-t)\big)dt \\
&=\int_{B_{r+1}(x)}f(x-t)\big(g(t+a_n v)-g(t)\big)dt \,,
\end{align*}
such that by bounded convergence
\begin{align*}
\lim_{n\to\infty}\frac{1}{a_n}\big((f*g)(x+a_n v)-(f*g)(x)\big)
&=\lim_{n\to\infty}\int_{B_{r+1}(x)}h_n(t)dt
=\int_{B_{r+1}(x)}\lim_{n\to\infty}h_n(t)dt\\
&=\int_{\R^d}f(x-t)\frac{\partial}{\partial v}g(t)dt\,.
\end{align*}
Since the sequence $(a_n)_{n\in\N}$ was arbitrary, we get 
\[
\frac{\partial}{\partial v}(f*g)(x)
=\int_{\R^d}f(x-t)\frac{\partial}{\partial v}g(t)dt=
\Big(f*\big(\frac{\partial}{\partial v}g\big)\Big)(x)\,.
\]
\ref{it:f-diff}. Let $x\in U$ with
 $B_{2r}(x)\subseteq U$.  For an arbitrary sequence $(a_n)_{n\in \N}$ of non-zero real numbers in $(-\|v\|^{-1}r,\|v\|^{-1}r)$ tending to 0 
define $h\colon \N\times\R^d\to\R$ 
by \[h_n(t):= \frac{1}{a_n}\big(f(t+a_n v)-f(t)\big)g(x-t)\,.\]
For every $n\in \N$, $h_n$ is a measurable function with with support contained in $B_{r}(x)$ and  
\[
\lim_{n\to\infty}h_n(t)=
\begin{cases}
\big(\frac{\partial}{\partial v}f(t)\big)g(x-t) & \text{for all } t\in B_r(x)\,,\\
0& \text{for all } t\notin B_r(x)\,.
\end{cases}
\]
Moreover,  since the support of $g$ is contained in  $B_r(0)$,  
$|(f(t+a_n v)-f(t))g(x-t)|\le |a_n|\big\|\frac{\partial}{\partial v}f|_{B_{2r}(x)}\big\|_\infty\big\|g\big\|_\infty$.
Thus the sequence $(h_n)_{n\in\N}$ is bounded and we can finish the proof analog to that of item \ref{it:g-diff}.
\end{proof}

\begin{definition}
Let $k\in\N\cup\{0\}$ and let $(\phi_n)_{n\in \N}$ be a sequence of 
$C^k$ functions $\phi_n\colon \R^d\to [0,\infty)$
with 
\begin{enumerate}
\item for all $n\in \N$ the support of  $\phi_n$ is contained in $B_\frac{1}{n}(0)$;
\item $\int_{\R^d} \phi_n=1$.
\end{enumerate}
Then we call $(\phi_n)_{n\in \N}$ a $C^k$-{\em approximate identity}.
\end{definition}

\begin{lemma}\label{lemma:app-ident-convergence}
Let $f\colon \R^d\to \R$ be locally integrable 
and $(\phi_n)_{n\in \N}$ a $C^k$-approximate 
identity, $k\in\N\cup\{0\}$. Then  for all points of continuity $x$ of $f$, 
\[
\lim_{n\to \infty} (f*\phi_n)(x)=f(x)\,.
\]
If $f$ is continuous with compact support, then 
\[
\lim_{n\to\infty}\|f*\phi_n-f\|_\infty=0\,.
\]
\end{lemma}

\begin{proof}
Let $x$ be a point of continuity of $f$ and let $\varepsilon>0$.
Then there exists $\delta>0$ such that 
for all $t\in \R^d$ with $\|t\|<\delta$ it holds that $|f(x+t)-f(x)|<\varepsilon$. Now let 
$n_0>\frac{1}{\delta}$, such that $\int_{B_\delta(0)}\phi_n(t)dt=1$ for all $n\ge n_0$. We therefore have,
for all $n\ge n_0$, 
\begin{align*}
|(f*\phi_n)(x)-f(x)|
&=\Big|\int_{B_\delta(0)} (f(x-t)-f(x))\phi_n(t)dt\Big|\\
&\le \int_{B_\delta(0)} |f(x-t)-f(x)|\phi_n(t)dt< \varepsilon \int_{B_\delta(0)} \phi_n(t)dt
= \varepsilon\,,
\end{align*}
that is, \( \lim_{n\to\infty}|(f*\phi_n)(x)-f(x)|=0\,.
\)
 
If $f$ has compact support, then $f$ is uniformly continuous and 
$\delta$ in the earlier argument can be chosen independently of $x$.
\end{proof}

\begin{lemma}\label{lemma:app-ident-bounded}
Let $(\phi_n)_{n\in \N}$ be a $C^k$-approximate 
identity, $k\in\N\cup\{0\}$.
If $f\colon \R^d\to \R$ is measurable and locally bounded, then
for all $x\in\R^d$,
\[
\limsup_{n\to \infty} |(f*\phi_n)(x)|
\le \lim_{n\to\infty}\sup\{|f(y)|\colon y\in B_\frac{1}{n}(x)\} \;.
\]
In particular, if $f$ is measurable and bounded, then
for all $x\in\R^d$,
\[
\limsup_{n\to \infty} |(f*\phi_n)(x)|
\le \|f\|_\infty<\infty \;.
\]
\end{lemma}

\begin{proof}
Let $x\in \R^d$. Then 
\begin{align*}
|(f*\phi_n)(x)|&\le\int_{\R^d}|f(x-t)|\phi_n(t)dt=\int_{B_{\frac{1}{n}}\!(0)}|f(x-t)|\phi_n(t)dt\\
&\le\sup\{|f(y)|\colon y\in B_\frac{1}{n}(x)\}\,,
\end{align*}
from which the first claim follows.
If in addition  $f$ is bounded,  $|(f*\phi_n)(x)|\le\sup\{|f(y)|\colon y\in B_\frac{1}{n}(x)\}\le \|f\|_\infty$.
\end{proof}
\begin{proof}[Proof of It\^o's formula (Theorem \ref{thm:ito})]
W.l.o.g.~we may assume that $f$ has compact support, such that by our assumptions 
$f$ and all 
its first and second partial derivatives (where they exist) are bounded by some
constant $K$.

Let $(\phi_n)_{n\in\N}$ be a $C^2$-approximate identity and let,
 for all $n\in \N$, $f_n:=f*\phi_n$, such that, by Lemma \ref{lemma:conv-diff} item \ref{it:g-diff}, $f_n$ is a 
$C^2$-function with 
$\lim_{n\rightarrow \infty} \|D(f-f_n)\|_\infty=0$ for every
$D\in \{1,\frac{\partial}{\partial x_i},\frac{\partial^2}{\partial x_i\partial x_j}\big| 1\le i\ne j\le d\}$ by Lemma \ref{lemma:app-ident-convergence}.  
For all $i\in\{1,\ldots,d\}$ we have $\frac{\partial^2 }{\partial x_i^2}f_n (x)=((\frac{\partial^2 }{\partial x_i^2}f)*\phi_n)(x)$
for all $x\notin\Theta^i$ and $n$ large enough, by Lemma \ref{lemma:conv-diff} item \ref{it:f-diff},   and thus $\big|\frac{\partial^2 }{\partial x_i^2}f_n (x)\big|\le K$ by Lemma \ref{lemma:app-ident-bounded}. 
Since $\frac{\partial^2 }{\partial x_i^2}f_n$ is continuous,  
$\big\|\frac{\partial^2 }{\partial x_i^2}f_n\big\|_\infty\le K$. We have obtained
\begin{equation}\label{eq:unif-bd}
\forall x\in\R^d\;\forall n\in\N\colon \Big|\frac{\partial^2 }{\partial x_i^2}f_n(x)\Big|\le K\,.
\end{equation}
Since $f_n \in C^2$, It\^o's formula holds:
\begin{align}\label{eq:ito-help}
f_n(X_t)&=f_n(X_0)+\sum_{i=1}^d \int_0^t \frac{\partial }{\partial x_i}f_n(X_s)\,dX_s^i
+\frac{1}{2} \sum_{i,j=1}^d \int_0^t \frac{\partial^2 }{\partial x_i\partial x_j}f_n(X_s)\,d[X^i,X^j]_s\,.
\end{align}
By uniform convergence of the integrands, we have convergence
\begin{align*}
\lim_{n\to \infty}\int_0^. \frac{\partial }{\partial x_i}f_n(X_s)\,dX_s^i
&= \int_0^. \frac{\partial }{\partial x_i}f(X_s)\,dX_s^i\\
\lim_{n\to \infty}\int_0^. \frac{\partial^2 }{\partial x_i\partial x_j}f_n(X_s)\,d[X^i,X^j]_s
&= \int_0^. \frac{\partial^2 }{\partial x_i\partial x_j}f(X_s)\,d[X^i,X^j]_s
\end{align*}
u.c.p.~for all $i,j\in \{1,\ldots,n\}$ with $i\ne j$.

Let us consider the term with the second derivative w.r.t.~$x_i$. On $\Theta^i$ this derivative is not defined, so set $\frac{\partial^2 f}{\partial x_i^2}\equiv 0$ on $\Theta^i$, for definiteness.
For every 
$t$,
\begin{align*}
\int_0^t\frac{\partial^2 }{\partial x_i^2}f_n(X_s)\,d[X^i]_s
&=
\int_0^t1_{\{|X^i_s|>\frac{1}{n}\}}\frac{\partial^2 }{\partial x_i^2}f_n(X_s)\,d[X^i]_s
+\int_0^t1_{\{|X^i_s| \le \frac{1}{n}\}}\frac{\partial^2 }{\partial x_i^2}f_n(X_s)\,d[X^i]_s\,.
\end{align*}
By Fatou's lemma, we have for every sequence $(h_n)_{n\in\N}$ of non-negative, measurable, and bounded functions
with $\|h_n\|_\infty\le 2K$ for all $n\in\N$,
\begin{align*}
\limsup_{n\to \infty}\int_0^t1_{\{|X^i_s|\le\frac{1}{n}\}}h_n(X_s)\,d[X^i]_s
&\le\int_0^t\limsup_{n\to \infty}1_{\{|X^i_s|\le\frac{1}{n}\}}h_n(X_s)\,d[X^i]_s\\
&\le\int_0^t1_{\{|X^i_s|=0\}}\limsup_{n\to \infty}h_n(X_s)\,d[X^i]_s\\
&\le 2K\int_0^t1_{\{X^i_s= 0\}}\,d[X^i]_s\;.
\end{align*}
From \cite[Chapter 3, Theorem 7.1]{karatzas1991} we know that 
\[
\int_0^t 1_{\{X^i_s=0\}} \,d[X^i]_s=2\int_\R 1_{\{0\}}(a)\Lambda_t(a) \,da=0\,,
\]
where $\Lambda_t(a)$ denotes the local time of $X^i$ in $a$ up to time $t$.
Therefore 
\begin{align*}
\limsup_{n\to \infty}\int_0^t1_{\{|X^i_s|\le\frac{1}{n}\}}h_n(X_s)\,d[X^i]_s&=0\;.
\end{align*}
In particular this holds for $h_n=\Big|\frac{\partial^2 }{\partial x_i^2}f_n-\frac{\partial^2 }{\partial x_i^2}f\Big|$, which by \eqref{eq:unif-bd} is uniformly bounded by $2K$ in $x$ and $n$, such that 
\begin{equation}\label{eq:zero}
\limsup_{n\to \infty}\int_0^t1_{\{|X^i_s|\le\frac{1}{n}\}}\Big|\frac{\partial^2 }{\partial x_i^2}f_n(X_s)-\frac{\partial^2 }{\partial x_i^2}f(X_s)\Big|\,d[X^i]_s=0\;.
\end{equation}
By bounded convergence, we also have 
\begin{equation}\label{eq:nonzero}
\lim_{n\to \infty}\int_0^t1_{\{|X^i_s|>\frac{1}{n}\}}\Big(\frac{\partial^2 }{\partial x_i^2}f_n(X_s)-\frac{\partial^2 }{\partial x_i^2}f(X_s)\Big)\,d[X^i]_s=0\,.
\end{equation}
Combining \eqref{eq:zero} and \eqref{eq:nonzero} gives
\begin{align*}
\lim_{n\to \infty}\Big|\int_0^t&\frac{\partial^2 }{\partial x_i^2}f_n(X_s)\,d[X^i]_s-\int_0^t\frac{\partial^2 }{\partial x_i^2}f(X_s)\,d[X^i]_s\Big|\\
&\le 
\lim_{n\to \infty}\Big|\int_0^t1_{\{|X^i_s|>\frac{1}{n}\}}\Big(\frac{\partial^2 }{\partial x_i^2}f_n(X_s)-\frac{\partial^2 }{\partial x_i^2}f(X_s)\Big)\,d[X^i]_s\Big|\\
&\quad +
\lim_{n\to \infty}\int_0^t1_{\{|X^i_s|\le\frac{1}{n}\}}\Big|\frac{\partial^2 }{\partial x_i^2}f_n(X_s)-\frac{\partial^2 }{\partial x_i^2}f(X_s)\Big|\,d[X^i]_s=0\,.
\end{align*}

Thus, for $n\rightarrow \infty$ all terms in \eqref{eq:ito-help} converge to the corresponding term with $f_n$ replaced by $f$.
\end {proof}

%
%
%
%
%
%

%

\vspace{2em}
\noindent{\bf Acknowledgments.} The authors thank the anonymous referees for their comments which helped to 
greatly improve the paper. In addition we thank Thomas Müller-Gronbach and Wolfgang Stockinger for pointing
out a mistake in an earlier version.


\end{document}